\theoremstyle{plain}
\newtheorem{theorem}{Theorem}[section]
\newtheorem{lem}[theorem]{Lemma}
\newtheorem{cla}[theorem]{Claim}
\newtheorem{prop}[theorem]{Proposition}
\newtheorem{conj}[theorem]{Conjecture}
\theoremstyle{definition}
\newtheorem{defn}[theorem]{Definition}
\newtheorem{rem}[theorem]{Remark}
\newtheorem{exa}[theorem]{Example}
\newcommand{\PP}{\mathbb P}
\newcommand{\Z}{\mathbb Z}
\newcommand{\Q}{\mathbb Q}
\newcommand{\R}{\mathbb R}
\newcommand{\C}{\mathbb C}
\newcommand{\Ob}{\mathfrak{Ob}}
\newcommand{\id}{\ensuremath{\mathop{\mathrm{id}}}}
\def\t{\,{}^t\!}
\newcommand{\rk}{\ensuremath{\operatorname{rank}}}
\newcommand{\Pic}{\mathop{\mathrm{Pic}}\nolimits}
\newcommand{\Ext}{\mathop{\mathrm{Ext}}\nolimits}
\newcommand{\Hom}{\mathop{\mathrm{Hom}}\nolimits}
\newcommand{\RHom}{\mathop{\mb R\mathrm{Hom}}\nolimits}
\newcommand{\mcRHom}{\mathop{\mb R\mathcal{H}om}\nolimits}
\newcommand{\RGamma}{\mathop{\mb R\Gamma}\nolimits}
\newcommand{\Lotimes}{\stackrel{\mb L}{\otimes}}
\newcommand{\Lboxtimes}{\stackrel{\mb L}{\boxtimes}}
\newcommand{\End}{\mathop{\mathrm{End}}\nolimits}
\newcommand{\Spec}{\operatorname{Spec}}
\newcommand{\GL}{GL}
\newcommand{\Coh}{\operatorname{Coh}}
\newcommand{\QCoh}{\operatorname{QCoh}}
\newcommand{\Auteq}{\operatorname{Auteq}}
\newcommand{\module}{\operatorname{mod}}
\newcommand{\Aut}{\operatorname{Aut}} 
\newcommand{\Sym}{Sym}
\newcommand{\nef}{nef}
\newcommand{\mc}{\mathcal}
\newcommand{\mb}{\mathbb}
\newcommand{\mk}{\mathfrak}
\newcommand{\NI}{\noindent}
\newcommand{\Supp}{\ensuremath{\operatorname{Supp}}}
\renewcommand{\labelenumi}{(\roman{enumi})}
\renewcommand{\labelenumii}{(\arabic{enumii})}
\newcommand{\Span}[1]{\left<#1\right>}
\newcommand{\Conv}[1]{Conv\left<#1\right>}
\def\mbi#1{\boldsymbol{#1}}
\newcommand{\Div}{\operatorname{div}}
\newcommand{\lce}{\lceil}
\newcommand{\rce}{\rceil}
\newcommand{\lfl}{\lfloor}
\newcommand{\rfl}{\rfloor}
\title{Exceptional collections on toric Fano threefolds and birational geometry
}
\author{Hokuto Uehara
}
\date{}
\begin{document}

\maketitle

\begin{abstract}
Bondal's conjecture 
states that the Frobenius push-forward of the structure sheaf 
$\mc O_X$ generates the derived category $D^b(X)$ for 
smooth projective toric varieties $X$. 

Bernardi and Tirabassi exhibit a full strong exceptional collection 
consisting of line bundles on  
smooth toric Fano $3$-folds
assuming Bondal's conjecture.
In this article, we prove Bondal's conjecture for 
smooth toric Fano $3$-folds
and improve upon their result
using birational geometry. 
\end{abstract}

\footnote{Keywords; Derived categories, toric Fano threefolds, birational geometry, exceptional collection}
\footnote{Mathematics Subject Classification 2000: 14E30, 14J30, 14J45, 14M25, 14F05}


\section{Introduction}
A full strong exceptional collection of a triangulated category can be thought of 
as the categorical analogue of a finite orthonormal basis of a vector space.
For the derived category $D^b(X)$ of coherent sheaves on a smooth projective variety $X$,
such a collection rarely exists, 
but if it exists, the derived category $D^b(X)$ is equivalent to the derived category 
$D^b(\module A)$ of the category $\module A$ of finitely generated modules over 
a finite dimensional algebra $A$.  
 
For any smooth toric DM orbifold $X$, 
Kawamata shows that there is a full, but not 
necessarily strong, exceptional collection on $X$ \cite{Ka06}.
Furthermore full strong exceptional collections on toric varieties
are studied by many people (cf.~\cite{BT10,CM10,CM,CRM11,DLM10,HP08,LM10,IU09}).

We can define an endomorphism $F_m$ ($m\in \Z_{>0}$)
called the \emph{Frobenius map} 
on any toric variety over a field of any characteristic
(some people also call it a \emph{multiplication map}).
It is also known that for smooth complete toric varieties $X$, 
$F_{m*}\mc O_X$ splits into line bundles and Thomsen \cite{Th00} 
finds an algorithm to compute the set of all direct summands of it. 
We denote the set by $\mk D_X$ for a sufficiently divisible integer $m$.
On the other hand, Bondal's conjecture predicts that
the set $\mk D_X$ classically generates the derived category $D^b(X)$. 
So sometimes, for instance in the case $|\mk D_X|=\rk K(X)$, 
it becomes a candidate of a full strong exceptional collection 
 consisting of line bundles on smooth projective toric varieties $X$. 

Bernardi and Tirabassi exhibit such 
collections on all eighteen smooth toric Fano $3$-folds by using 
Frobenius maps \cite{BT10}.
Using birational geometry, we obtain a stronger result. Precisely we show the following.

\begin{theorem}\label{thm:main1}
For sixteen smooth toric Fano $3$-folds $X$ over $\C$, 
the set $\mk D_X$ becomes a full strong exceptional collection
(after choosing an appropriate order). 
For the remaining two cases, $(4)$ and $(11)$ in Theorem $\ref{thm:18fano}$, 
we present a proper subset of $\mk D_X$ which 
becomes a full strong exceptional collection.
\end{theorem}

Note that this theorem implies that Bondal's conjecture is true for smooth toric Fano $3$-folds. 
The strategy to prove Theorem \ref{thm:main1} is as follows;

\paragraph{Step 1.}
Let $f\colon X\to Y$ be an extremal birational contractions between 
smooth toric Fano $3$-folds. Assume that $\mk D_X$ 
forms a full strong exceptional collection. 
Then so is $\mk D_Y$. 
This is done by Lemmas \ref{lem:f_*} and \ref{lem:fano3}.

\paragraph{Step 2.}
By Step 1, it is enough to show that $\mk D_X$ forms a full 
strong exceptional collection only
for (birationally) maximal Fano $3$-folds $X$, 
namely in (11), (17) and (18) 
in Theorem \ref{thm:18fano}. Unfortunately, 
in the case (11), $\mk D_X$ does not form a strong 
exceptional collection. Instead
we can find a subset $\mk D_{\nef}$ of 
$\mk D_X$ which becomes a full strong exceptional collection.
Then, as in Step 1, an inductive argument works 
in the case $X$ in (11).

\paragraph{Step 3.} 
To check the strongness of the chosen set in Step 2,
it is enough to check the dual of line bundles 
in the set are nef (Lemma \ref{lem:vani}). 
This is easily done by observing Figure \ref{Fano11&18}.
To check the fullness in Step 2, 
we prove Bondal's conjecture
in our situation by rather tedious, but elementary calculation.
This step is done in \S \ref{section:Examples} and \S \ref{sec:Bondal}.

\paragraph{} 
In \cite{BT10},
 Bernardi and Tirabassi check a similar statement to Theorem \ref{thm:main1} 
only in the cases
(9), (11), (14), (15) and (16) separately, since the existence of 
a full strong exceptional collection, not necessarily coming from 
$\mk D_X$, was already known in the remaining cases. 
In their proof of the strongness, a rather long and explicit calculation is presented,
using the result in \cite{BH09}.   
Moreover they deduce fullness for their collections only after assuming the conjecture of  Bondal (see Remark \ref{rem:Bondal}).

Dubrobin conjectures that
for a smooth projective variety $X$,
the quantum cohomology of $X$ is semi-simple if and only if 
$X$ is a smooth Fano variety with a full exceptional collection.
Although his conjecture turns out to be wrong, it is still believed that there is a relationship 
between the existence of full exceptional collections on $X$ and its quantum cohomology 
(cf.~\cite{Ba04}.) 
Furthermore several people conjectured
that every smooth toric Fano variety has a
 full strong exceptional collection
consisting of line bundles \cite{BH09,CM10}.
But recently Efimov provides a counterexample to this conjecture 
\cite{Ef10}.  At least in the $3$-dimensional case, the conjecture is true
by Theorem \ref{thm:main1}.  

It should be pointed out that there is a smooth projective toric (not Fano) surface 
which does not possess any
full strong exceptional collections consisting of line bundles (\cite{HP06,HP08}),
and it is also worthwhile to mention that there is a smooth 
toric Fano variety $X$ such that we cannot 
choose full strong exceptional collections from the set $\mk D_X$ 
\cite{LM10}.

Because our exceptional collections consist of line bundles, 
the corresponding quivers, Gram matrices etc., 
should be rather easy to  be computed. 
Furthermore our collection exhibits nice properties, as it behaves well as in Step 1 above. 

The structure of this paper is as follows:
In \S \ref{sec:generator}, we give some basic definitions on the derived 
categories of coherent sheaves. 
In \S \ref{sec:toric},
we explain several notion on toric varieties and cite some useful
results for smooth toric Fano $3$-folds. We also explain how 
to determine the set $\mk D_X$, following Thomsen. 
In \S \ref{section:Examples}, we actually determine it for several
toric varieties. 
In \S \ref{sec:Bondal}, we prove Bondal's conjecture 
for maximal smooth toric Fano $3$-folds. 
In \S \ref{sec:birational}, we accomplish Step 1 above and give the proof 
of Theorem \ref{thm:main1}. In Theorem \ref{thm:surface}  
we also obtain a similar result 
in the surface case to Theorem \ref{thm:main1}.

\paragraph{Notation and conventions}
For a smooth variety $X$, we denote the bounded 
 derived category of coherent sheaves on $X$
by $D^b(X)$. \emph{$T$-invariant} is an abbreviation of 
\emph{torus invariant}. For objects $\mc E, \mc F\in D^b(X)$, 
we define 
$$
\Hom _X^i(\mc E,\mc F):=\Hom _{D^b(X)}(\mc E,\mc F[i]).
$$
We work over $\C$ for simplicity.
 
We denote by $M(l,m)$ the space of $l\times m$ matrices defined over $\Z$, and 
${}^tA$ is the transpose of a matrix $A\in M(l,m)$

 
\section{Generators of derived categories}\label{sec:generator}
In this section, 
we give several basic definitions on triangulated 
categories and derived categories of coherent sheaves. 

\begin{defn}
Let $\mc S=\{\mc S_i\}$ be a set of objects in a triangulated category $\mc D$. 
\begin{enumerate}
\item
We denote by  $\Span{\mc S}$   
the smallest triangulated subcategory of $\mc D$ containing all $\mc S_i$, 
closed under isomorphisms and direct summands.
For a triangulated subcategory $\mc C$ of $\mc D$, 
we denote by $\mc C^{\perp}$ the full triangulated 
subcategory of $\mc D$ whose objects $\mc F$ 
satisfy the property $\Hom _{\mc D}(C,\mc F)=0$ for all $C\in \mc C$. 
\item
We say that $\mc S$ \emph{classically generates} $\mc D$ 
if $\Span{\mc S}=\mc D$. 
We also call $\mc S$ a \emph{classical generator} of $\mc D$.
\item
We say that $\mc S$ \emph{generates} $\mc D$ if $\Span{\mc S}^\perp=0$.
We also call $\mc S$ a \emph{generator} of $\mc D$.
\end{enumerate}
\end{defn}

Let $X$ be a smooth complete variety over $\C$.

\begin{defn} 
\begin{enumerate}
\item
An object $\mc E\in D^b(X)$ is called \emph{exceptional} if it satisfies 
$$
\Hom _X^i(\mc E,\mc E)=
\begin{cases}
\C &\mbox{$i=0$}\\
0 &\mbox{otherwise}.
\end{cases}
$$ 
\item
An ordered set $(\mc  E_1,\ldots,\mc E_n)$ 
of exceptional objects is called an \emph{exceptional collection} 
if the following condition holds;
$$\Hom ^i_X(\mc E_k,\mc E_j)=0$$ 
for all $k>j$ and all $i$. When we say that a finite set $\mc S$
of objects is an exceptional collection, it means that $\mc S$ 
forms an exceptional collection after choosing an appropriate order. 
\item 
An exceptional collection $(\mc  E_1,\ldots,\mc E_n)$ is called 
\emph{strong} if 
$$\Hom ^i_X(\mc E_k,\mc E_j)=0$$ 
for all $k,j$ and $i\ne 0$.
\item
An exceptional collection $(\mc E_1,\ldots,\mc E_n)$ is called 
\emph{full} if 
$$\Span{\mc  E_1,\ldots,\mc E_n}=D^b(X).$$ 
\end{enumerate}
\end{defn}

\begin{rem}
If $X$ has a full exceptional collection consisting of
$n$ exceptional objects, the rank of its $K$-group $K(X)$ is $n$ (\cite{BP94}).
Furthermore it is known that the rank of $K$-group is the 
number of the maximal cones in the fan
for smooth projective toric varieties (cf.~\cite[Theorem in \S 5.2.]{Fu93}). 
For $3$-dimensional smooth projective 
toric varieties $X$, we can see $\rk K(X)=2\rho (X)+2$.
\end{rem}


\section{Toric varieties}\label{sec:toric}
Throughout this section, we use the following notation.
Let $N=\Z ^n$ be a lattice of rank $n$ and $M$ its dual.
A fan $\Delta$ in $N_{\R}=N\otimes _\Z\R$ consists of a finite number of rational  strongly convex polyhedral cones 
in $N_{\R}$, and it 
determines a toric variety $X=X(\Delta)$.
We define 
$
\mc V(\Delta)
$ 
to be the set of primitive generators of $1$-dimensional cones in $\Delta$.

For a cone $\sigma$ in $\Delta$,
put 
$R_\sigma =\C\bigl[ \chi ^{\mbi u} \bigm | 
\mbi u \in M\cap \sigma ^\vee\bigr]$, 
where 
$\bigl\{ \chi ^{\mbi u} \bigm | 
\mbi u \in M\cap \sigma ^\vee \bigr \}$ is a basis of $\C$-vector space $R_{\sigma}$. 
We can define
an obvious multiplication on $R$ as usual (cf.~\cite[page 15]{Fu93}).
Then the affine toric variety $U_{\sigma}$ corresponds to $\sigma$ is just 
$\Spec R_{\sigma}$, and
the rational function field of the toric varieties $X$
is just 
$\C\bigl[ \chi ^{\mbi u} \bigm | \mbi u \in M\bigr].$ 

\subsection{Double $\Z$-weight}
According to \cite{Od88}, we introduce the notion of 
doubly $\Z$-weighted 
triangulations of a $2$-sphere. For simplicity, 
we restrict to the case $n=3$, namely $N=\Z^3$ below.
We can obviously identify the set of 
half lines starting from the origin $\mbi 0$ of $N_\R$ with 
$$
S^{2}:=(N_\R \backslash \{\mbi 0\})/\R_{>0} .
$$ 
Let 
$$
\pi\colon N_\R \backslash \{\mbi 0\}\to S^{2}
$$
be the projection.
We call $\pi (v)$ a \emph{rational point} of $S^{2}$ corresponding 
to a primitive element $v\in N$, and $v$ is called the \emph{$N$-weight} of 
the rational point $\pi(v)$.
For a cone 
$\sigma=\R_{\ge 0}\mbi v_{1}+\cdots +\R_{\ge 0}\mbi v_{s}\in \Delta$
($\mbi v_i\in \mc V(\Delta)$), 
$\pi (\sigma \backslash \{\mbi 0\})$ 
 is a convex spherical cell in $S^{2}$ with rational points
 $\pi(\mbi v_{1}),\ldots,\pi(\mbi v_{s})$ as vertices.
Thus for a fan $\Delta$, we get a convex spherical cell decomposition 
$$
\bigl\{\pi (\sigma\backslash \{\mbi 0\} \bigm| \sigma\in \Delta \bigr\}
$$ of 
$\pi (|\Delta| \backslash \{\mbi 0\})$. 

Suppose that a fan $\Delta$ is complete and non-singular,
which is equivalent to the condition that the corresponding toric variety 
$X=X(\Delta)$ is proper and smooth. Then we get a simplicial 
cell decomposition
of $S^{2}$. Moreover, for each $3$-dimensional cone $\sigma\ \in \Delta$,
 the corresponding  spherical $2$-simplex 
$\pi (\sigma\backslash \{\mbi 0\})$ has vertices whose $N$-weights 
$\mbi v_{1},\mbi v_2,\mbi v_{3}$ form a $\Z$-basis of $N$.
For each $2$-dimensional cone $\tau \in \Delta$, there are exactly two 
$3$-dimensional cone $\sigma,\sigma '\in \Delta$ such that 
$\sigma \cap \sigma'=\tau$. In this case, the sets 
$\{\mbi v,\mbi v_2,\mbi v_3\}$ and   
$\{\mbi v',\mbi v_2,\mbi v_3\}$ of $N$-weights for the vertices of 
$\pi(\sigma \backslash \{\mbi 0\})$ and 
$\pi(\sigma' \backslash \{\mbi 0\})$, respectively,
are $\Z$-bases of $N$. Moreover we have
$$
\mbi v+\mbi v'+\alpha_2\mbi v_2+\alpha_3\mbi v_3=\mbi 0
$$
for $\alpha_j\in \Z$ uniquely determined by $\tau$. 
For 
$$\rho=\R_{\ge 0}\mbi v, \rho'=\R_{\ge 0}\mbi v',
\rho_j=\R_{\ge 0}\mbi v_j\in \Delta  \quad \mbox{ ($j=2,3$) },$$
let $D,D',D_j$ are the corresponding $T$-invariant divisors. 
Then it is known (cf.~\cite[page 81]{Od88}) that we have
\begin{equation}\label{eqn:inter}
\alpha_j=( D_j\cdot D_2\cdot D_3).
\end{equation}
We then endow the edge $\pi(\mbi v_2),\pi(\mbi v_3)$ with the \emph{double
$\Z$-weight} $\alpha_2,\alpha_3$, 
where we place $\alpha_2$ (resp. $\alpha_3$)
on the side of the vertex $\pi(\mbi v_2)$ (resp. $\pi(\mbi v_3)$) 
as in Figure \ref{double}.  For simplicity, here and henceforth 
we always denote a rational point $\pi(\mbi v)$ 
by its $N$-weight $\mbi v$ in figures of 
double $\Z$-weights. 
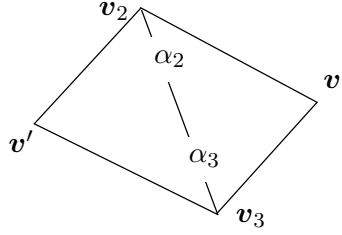
\begin{figure}[h]
\[
\unitlength 0.1in
\begin{picture}( 19.3700, 11.4800)( -0.5500,-17.4300)
%
\special{pn 8}%
\special{pa 974 678}%
\special{pa 1368 1744}%
\special{fp}%
\special{pa 1368 1744}%
\special{pa 418 1270}%
\special{fp}%
\special{pa 418 1270}%
\special{pa 968 668}%
\special{fp}%
\special{pa 968 664}%
\special{pa 1882 1158}%
\special{fp}%
\special{pa 1882 1158}%
\special{pa 1362 1740}%
\special{fp}%
\put(15.4000,-17.6000){\makebox(0,0){$\mbi v_3$}}%
\put(8.3000,-6.8000){\makebox(0,0){$\mbi v_2$}}%
%
\special{pn 8}%
\special{sh 0}%
\special{pa 930 820}%
\special{pa 1206 820}%
\special{pa 1206 1050}%
\special{pa 930 1050}%
\special{pa 930 820}%
\special{ip}%
\put(11.2000,-9.3000){\makebox(0,0){$\alpha_2$}}%
%
\special{pn 8}%
\special{sh 0}%
\special{pa 1110 1340}%
\special{pa 1386 1340}%
\special{pa 1386 1570}%
\special{pa 1110 1570}%
\special{pa 1110 1340}%
\special{ip}%
\put(13.0000,-14.5000){\makebox(0,0){$\alpha_3$}}%
\put(19.6000,-10.5000){\makebox(0,0){$\mbi v$}}%
\put(3.5000,-13.6000){\makebox(0,0){$\mbi v'$}}%
\end{picture}%
\]
\caption{Double $\Z$-weight}\label{double} 
\end{figure} 

We have normal bundle sequences;
\begin{align*}
0\to &\mc N_{C/D_2}\to \mc N_{C/X} \to \mc N_{D_2/X}|_C\to 0\\
0\to &\mc N_{C/D_3}\to \mc N_{C/X} \to \mc N_{D_3/X}|_C\to 0,
\end{align*}
where $C\cong \mb P^1$ is the $T$-invariant curve 
corresponding to the cone $\tau$.
Then we know as above $\mc N_{C/D_j}\cong \mc O_{\PP^1}(-\alpha_ j)$
and $\mc N_{D_j/X}|_C \cong \mc O_{\PP^1}(-\alpha_ {j'})$, where
$\{j,j'\}=\{2,3\}$. Combining both sequences,
we conclude that they split and so we have
\begin{equation}\label{eqn:normal}
\mc N_{C/X}\cong 
\mc O_{\PP^1}(-\alpha _2)\oplus \mc O_{\PP^1}(-\alpha _3).
\end{equation}

We show in Figure \ref{blowup} 
the change of double $\Z$-weights under the blowing-up
along a $T$-invariant curve  \cite[page 90]{Od88}. 
The segment attached to a oval
corresponds to the curve, and the vertex with a dark gray small circle
corresponds to the exceptional divisor.  
Figure \ref{blowup} will be used 
to find the centers and the exceptional divisors of blowing-ups
in  Figure \ref{pic18}.
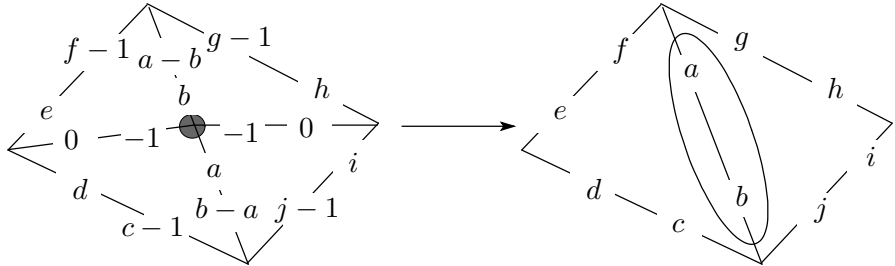
\begin{figure}[h]
\[
\unitlength 0.1in
\begin{picture}( 45.7800, 13.6000)(  4.6000,-20.2000)
%
\special{pn 8}%
\special{pa 3850 680}%
\special{pa 4364 2020}%
\special{fp}%
\special{pa 4364 2020}%
\special{pa 3120 1424}%
\special{fp}%
\special{pa 3120 1424}%
\special{pa 3840 666}%
\special{fp}%
\special{pa 3840 660}%
\special{pa 5038 1284}%
\special{fp}%
\special{pa 5038 1284}%
\special{pa 4358 2016}%
\special{fp}%
%
\special{pn 8}%
\special{sh 0}%
\special{pa 3876 928}%
\special{pa 4108 928}%
\special{pa 4108 1116}%
\special{pa 3876 1116}%
\special{pa 3876 928}%
\special{ip}%
\put(40.0000,-10.1200){\makebox(0,0){$a$}}%
%
\special{pn 4}%
\special{sh 0.600}%
\special{ar 1414 1300 66 62  0.0000000 6.2831853}%
%
\special{pn 8}%
\special{sh 0}%
\special{pa 4136 1582}%
\special{pa 4368 1582}%
\special{pa 4368 1770}%
\special{pa 4136 1770}%
\special{pa 4136 1582}%
\special{ip}%
\put(42.5900,-16.6500){\makebox(0,0){$b$}}%
%
\special{pn 8}%
\special{sh 0}%
\special{pa 3822 1740}%
\special{pa 4054 1740}%
\special{pa 4054 1928}%
\special{pa 3822 1928}%
\special{pa 3822 1740}%
\special{ip}%
\put(39.3000,-18.1800){\makebox(0,0){$c$}}%
%
\special{pn 8}%
\special{sh 0}%
\special{pa 3368 1532}%
\special{pa 3600 1532}%
\special{pa 3600 1720}%
\special{pa 3368 1720}%
\special{pa 3368 1532}%
\special{ip}%
\put(34.9300,-16.2600){\makebox(0,0){$d$}}%
%
\special{pn 8}%
\special{sh 0}%
\special{pa 3196 1146}%
\special{pa 3428 1146}%
\special{pa 3428 1334}%
\special{pa 3196 1334}%
\special{pa 3196 1146}%
\special{ip}%
\put(33.2000,-12.3000){\makebox(0,0){$e$}}%
%
\special{pn 8}%
\special{sh 0}%
\special{pa 3510 818}%
\special{pa 3740 818}%
\special{pa 3740 1008}%
\special{pa 3510 1008}%
\special{pa 3510 818}%
\special{ip}%
\put(36.3300,-9.0300){\makebox(0,0){$f$}}%
%
\special{pn 8}%
\special{sh 0}%
\special{pa 4136 770}%
\special{pa 4368 770}%
\special{pa 4368 958}%
\special{pa 4136 958}%
\special{pa 4136 770}%
\special{ip}%
\put(42.5900,-8.5400){\makebox(0,0){$g$}}%
%
\special{pn 8}%
\special{sh 0}%
\special{pa 4622 1016}%
\special{pa 4854 1016}%
\special{pa 4854 1206}%
\special{pa 4622 1206}%
\special{pa 4622 1016}%
\special{ip}%
\put(47.4500,-11.0100){\makebox(0,0){$h$}}%
%
\special{pn 8}%
\special{sh 0}%
\special{pa 4806 1374}%
\special{pa 5036 1374}%
\special{pa 5036 1562}%
\special{pa 4806 1562}%
\special{pa 4806 1374}%
\special{ip}%
\put(49.2900,-14.5700){\makebox(0,0){$i$}}%
%
\special{pn 8}%
\special{sh 0}%
\special{pa 4546 1650}%
\special{pa 4778 1650}%
\special{pa 4778 1838}%
\special{pa 4546 1838}%
\special{pa 4546 1650}%
\special{ip}%
\put(46.7000,-17.3500){\makebox(0,0){$j$}}%
%
\special{pn 8}%
\special{pa 1190 680}%
\special{pa 1704 2020}%
\special{fp}%
\special{pa 1704 2020}%
\special{pa 460 1424}%
\special{fp}%
\special{pa 460 1424}%
\special{pa 1180 666}%
\special{fp}%
\special{pa 1180 660}%
\special{pa 2378 1284}%
\special{fp}%
\special{pa 2378 1284}%
\special{pa 1698 2016}%
\special{fp}%
%
\special{pn 8}%
\special{sh 0}%
\special{pa 1162 1740}%
\special{pa 1394 1740}%
\special{pa 1394 1928}%
\special{pa 1162 1928}%
\special{pa 1162 1740}%
\special{ip}%
\put(12.1000,-18.3000){\makebox(0,0){$c-1$}}%
%
\special{pn 8}%
\special{sh 0}%
\special{pa 708 1532}%
\special{pa 940 1532}%
\special{pa 940 1720}%
\special{pa 708 1720}%
\special{pa 708 1532}%
\special{ip}%
\put(8.3300,-16.2600){\makebox(0,0){$d$}}%
%
\special{pn 8}%
\special{sh 0}%
\special{pa 536 1146}%
\special{pa 768 1146}%
\special{pa 768 1334}%
\special{pa 536 1334}%
\special{pa 536 1146}%
\special{ip}%
\put(6.6000,-12.3000){\makebox(0,0){$e$}}%
%
\special{pn 8}%
\special{sh 0}%
\special{pa 850 818}%
\special{pa 1080 818}%
\special{pa 1080 1008}%
\special{pa 850 1008}%
\special{pa 850 818}%
\special{ip}%
\put(9.2000,-9.0300){\makebox(0,0){$f-1$}}%
%
\special{pn 8}%
\special{sh 0}%
\special{pa 1476 770}%
\special{pa 1708 770}%
\special{pa 1708 958}%
\special{pa 1476 958}%
\special{pa 1476 770}%
\special{ip}%
%
\special{pn 8}%
\special{sh 0}%
\special{pa 1962 1016}%
\special{pa 2194 1016}%
\special{pa 2194 1206}%
\special{pa 1962 1206}%
\special{pa 1962 1016}%
\special{ip}%
\put(20.8500,-11.0100){\makebox(0,0){$h$}}%
%
\special{pn 8}%
\special{sh 0}%
\special{pa 2146 1374}%
\special{pa 2376 1374}%
\special{pa 2376 1562}%
\special{pa 2146 1562}%
\special{pa 2146 1374}%
\special{ip}%
\put(22.5000,-14.8800){\makebox(0,0){$i$}}%
%
\special{pn 8}%
\special{sh 0}%
\special{pa 1886 1650}%
\special{pa 2118 1650}%
\special{pa 2118 1838}%
\special{pa 1886 1838}%
\special{pa 1886 1650}%
\special{ip}%
\put(20.1000,-17.3500){\makebox(0,0){$j-1$}}%
%
\special{pn 8}%
\special{pa 460 1422}%
\special{pa 1422 1294}%
\special{fp}%
\special{pa 2372 1294}%
\special{pa 1422 1294}%
\special{fp}%
\put(16.6000,-8.4900){\makebox(0,0){$g-1$}}%
%
\special{pn 8}%
\special{sh 0}%
\special{pa 1046 1246}%
\special{pa 1276 1246}%
\special{pa 1276 1434}%
\special{pa 1046 1434}%
\special{pa 1046 1246}%
\special{ip}%
\put(11.5300,-13.6400){\makebox(0,0){$-1$}}%
%
\special{pn 8}%
\special{sh 0}%
\special{pa 1558 1218}%
\special{pa 1790 1218}%
\special{pa 1790 1406}%
\special{pa 1558 1406}%
\special{pa 1558 1218}%
\special{ip}%
\put(16.6600,-13.3700){\makebox(0,0){$-1$}}%
%
\special{pn 8}%
\special{sh 0}%
\special{pa 692 1290}%
\special{pa 924 1290}%
\special{pa 924 1478}%
\special{pa 692 1478}%
\special{pa 692 1290}%
\special{ip}%
\put(7.8900,-13.6900){\makebox(0,0){$0$}}%
%
\special{pn 8}%
\special{sh 0}%
\special{pa 1906 1228}%
\special{pa 2138 1228}%
\special{pa 2138 1416}%
\special{pa 1906 1416}%
\special{pa 1906 1228}%
\special{ip}%
\put(20.0400,-13.0600){\makebox(0,0){$0$}}%
%
\special{pn 8}%
\special{sh 0}%
\special{pa 1432 1444}%
\special{pa 1664 1444}%
\special{pa 1664 1632}%
\special{pa 1432 1632}%
\special{pa 1432 1444}%
\special{ip}%
\put(15.2900,-15.3300){\makebox(0,0){$a$}}%
%
\special{pn 8}%
\special{sh 0}%
\special{pa 1486 1660}%
\special{pa 1718 1660}%
\special{pa 1718 1848}%
\special{pa 1486 1848}%
\special{pa 1486 1660}%
\special{ip}%
\put(15.9400,-17.3100){\makebox(0,0){$b-a$}}%
%
\special{pn 8}%
\special{sh 0}%
\special{pa 1180 840}%
\special{pa 1412 840}%
\special{pa 1412 1028}%
\special{pa 1180 1028}%
\special{pa 1180 840}%
\special{ip}%
\put(12.9700,-9.4800){\makebox(0,0){$a-b$}}%
%
\special{pn 8}%
\special{sh 0}%
\special{pa 1290 1048}%
\special{pa 1522 1048}%
\special{pa 1522 1236}%
\special{pa 1290 1236}%
\special{pa 1290 1048}%
\special{ip}%
\put(13.7100,-11.3700){\makebox(0,0){$b$}}%
%
\special{pn 8}%
\special{pa 2500 1300}%
\special{pa 3080 1300}%
\special{fp}%
\special{sh 1}%
\special{pa 3080 1300}%
\special{pa 3014 1280}%
\special{pa 3028 1300}%
\special{pa 3014 1320}%
\special{pa 3080 1300}%
\special{fp}%
%
\special{pn 8}%
\special{pa 4310 1318}%
\special{pa 4320 1348}%
\special{pa 4330 1378}%
\special{pa 4340 1408}%
\special{pa 4348 1440}%
\special{pa 4356 1470}%
\special{pa 4364 1502}%
\special{pa 4370 1534}%
\special{pa 4376 1564}%
\special{pa 4382 1596}%
\special{pa 4386 1628}%
\special{pa 4390 1660}%
\special{pa 4394 1692}%
\special{pa 4394 1724}%
\special{pa 4394 1756}%
\special{pa 4394 1788}%
\special{pa 4390 1820}%
\special{pa 4382 1850}%
\special{pa 4370 1880}%
\special{pa 4350 1904}%
\special{pa 4320 1918}%
\special{pa 4288 1914}%
\special{pa 4260 1900}%
\special{pa 4234 1882}%
\special{pa 4210 1860}%
\special{pa 4188 1836}%
\special{pa 4168 1812}%
\special{pa 4148 1786}%
\special{pa 4130 1760}%
\special{pa 4114 1732}%
\special{pa 4098 1704}%
\special{pa 4082 1676}%
\special{pa 4068 1648}%
\special{pa 4052 1620}%
\special{pa 4040 1592}%
\special{pa 4026 1562}%
\special{pa 4012 1532}%
\special{pa 4000 1504}%
\special{pa 3988 1474}%
\special{pa 3978 1444}%
\special{pa 3968 1414}%
\special{pa 3958 1382}%
\special{pa 3948 1352}%
\special{pa 3938 1322}%
\special{pa 3930 1290}%
\special{pa 3922 1260}%
\special{pa 3914 1228}%
\special{pa 3908 1198}%
\special{pa 3902 1166}%
\special{pa 3896 1134}%
\special{pa 3892 1102}%
\special{pa 3888 1070}%
\special{pa 3884 1040}%
\special{pa 3884 1008}%
\special{pa 3884 976}%
\special{pa 3884 944}%
\special{pa 3888 912}%
\special{pa 3896 880}%
\special{pa 3908 850}%
\special{pa 3930 826}%
\special{pa 3958 814}%
\special{pa 3990 818}%
\special{pa 4020 832}%
\special{pa 4044 852}%
\special{pa 4068 872}%
\special{pa 4090 896}%
\special{pa 4110 920}%
\special{pa 4130 946}%
\special{pa 4148 972}%
\special{pa 4166 1000}%
\special{pa 4182 1028}%
\special{pa 4196 1056}%
\special{pa 4212 1084}%
\special{pa 4226 1112}%
\special{pa 4240 1142}%
\special{pa 4252 1170}%
\special{pa 4266 1200}%
\special{pa 4278 1230}%
\special{pa 4290 1258}%
\special{pa 4300 1290}%
\special{pa 4310 1318}%
\special{sp}%
\end{picture}%
\]
\caption{Change of double $\Z$-weights under the blowing-up}\label{blowup} 
\end{figure} 

\subsection{Toric Fano $3$-folds}
Smooth toric Fano $3$-folds are classified as follows. 


\begin{theorem}[\cite{Ba82,WW82}]\label{thm:18fano}
Up to isomorphism,
there are $18$ distinct Fano $3$-folds. 
Among them, each of $(11),(12),(14),(15),(16)$ and $(18)$ below
is obtained from one of the others by a finite succession
of equivariant blowing-ups.
\begin{enumerate}
\item[$(1)$]
$\PP ^3.$
\item[$(2)$]
$\PP^2 \times \PP ^1.$
\item[$(3)$]
The $\PP ^1$-bundle $\PP(\mc O_Y\oplus \mc O_Y(1))$
over $Y=\PP ^2$.
\item[$(4)$]
The $\PP ^1$-bundle $\PP(\mc O_Y\oplus \mc O_Y(2))$
over $Y=\PP ^2$.
\item[$(5)$]
The $\PP ^2$-bundle $\PP(\mc O_Y\oplus \mc O_Y \oplus \mc O_Y(1))$
over $Y=\PP ^1$.
\item[$(6)$]
$\PP ^1\times \PP^1\times \PP^1$.
\item[$(7)$]
The $\PP ^1$-bundle $\PP(\mc O_Y\oplus \mc O_Y(f_1+f_2))$
over $Y=\PP ^1\times \PP^1$, where $f_1$ and $f_2$ are fibers
of the two projections from $Y$ to $\PP^1$.
\item[$(8)$]
$\PP (\mc O_Y\oplus \mc O_Y(f_1-f_2))$ in the notation of $(7)$.
\item[$(9)$]
$\PP ^1\times \Sigma_1$ for the Hirzeburch surface $\Sigma_1$.
\item[$(10)$]
The $\PP ^1$-bundle $\PP(\mc O_Y\oplus \mc O_Y(s+f))$
over $Y=\Sigma_1$, where $f$ is a fiber of the $\PP^1$-bundle on $\Sigma_1$
and $s$ is the minimal section with $s^2=-1$. 
\item[$(13)$]
$\PP^ 1\times Y_2$, where $Y_2$ is the toric del Pezzo surface obtained from 
$\PP^ 2$ by the equivariant blowing-up at two of the $T$-invariant points.
\item[$(17)$]
$\PP^ 1\times Y_3$, where $Y_3$ is the toric del Pezzo surface obtained from 
$\PP^ 2$ by the equivariant blowing-up at the three $T$-invariant points.
\end{enumerate}

Their birational relations are described in Figure \ref{pic18a}.
There are just three maximal Fano $3$-folds, $(11)$, $(17)$ and $(18)$,
with respect to birational relations.

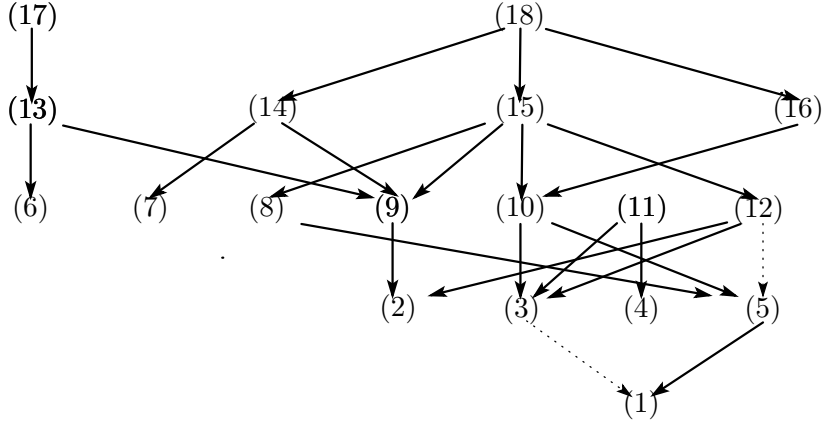
\begin{figure}[h]
\[
\unitlength 0.1in
\begin{picture}( 42.2900, 20.6500)(  5.9900,-27.0500)
\put(8.6900,-7.2500){\makebox(0,0){$(17)$}}%
\put(8.6900,-7.2500){\makebox(0,0){$(17)$}}%
\put(8.6900,-12.1600){\makebox(0,0){$(13)$}}%
%
\special{pn 13}%
\special{pa 864 788}%
\special{pa 864 1164}%
\special{fp}%
\special{sh 1}%
\special{pa 864 1164}%
\special{pa 884 1096}%
\special{pa 864 1110}%
\special{pa 844 1096}%
\special{pa 864 1164}%
\special{fp}%
\put(8.6900,-12.1600){\makebox(0,0){$(13)$}}%
\put(8.6900,-12.1600){\makebox(0,0){$(13)$}}%
\put(8.5800,-17.3100){\makebox(0,0){$(6)$}}%
%
\special{pn 13}%
\special{pa 858 1288}%
\special{pa 858 1664}%
\special{fp}%
\special{sh 1}%
\special{pa 858 1664}%
\special{pa 878 1596}%
\special{pa 858 1610}%
\special{pa 838 1596}%
\special{pa 858 1664}%
\special{fp}%
\put(14.7800,-17.2800){\makebox(0,0){$(7)$}}%
\put(20.8100,-17.3100){\makebox(0,0){$(8)$}}%
\put(27.3200,-17.3100){\makebox(0,0){$(9)$}}%
\put(27.3200,-17.3100){\makebox(0,0){$(9)$}}%
\put(33.9400,-17.3100){\makebox(0,0){$(10)$}}%
\put(40.2400,-17.2900){\makebox(0,0){$(11)$}}%
\put(40.2400,-17.2900){\makebox(0,0){$(11)$}}%
\put(46.2800,-17.3400){\makebox(0,0){$(12)$}}%
%
\special{pn 13}%
\special{pa 1026 1294}%
\special{pa 2622 1668}%
\special{fp}%
\special{sh 1}%
\special{pa 2622 1668}%
\special{pa 2562 1634}%
\special{pa 2570 1656}%
\special{pa 2554 1672}%
\special{pa 2622 1668}%
\special{fp}%
\put(33.8900,-7.2500){\makebox(0,0){$(18)$}}%
\put(21.1200,-12.0600){\makebox(0,0){$(14)$}}%
\put(33.9400,-12.0600){\makebox(0,0){$(15)$}}%
%
\special{pn 13}%
\special{pa 3394 788}%
\special{pa 3390 1158}%
\special{fp}%
\special{sh 1}%
\special{pa 3390 1158}%
\special{pa 3410 1092}%
\special{pa 3390 1106}%
\special{pa 3370 1092}%
\special{pa 3390 1158}%
\special{fp}%
\special{pa 3390 1158}%
\special{pa 3390 1158}%
\special{fp}%
\put(48.3200,-12.1200){\makebox(0,0){$(16)$}}%
%
\special{pn 13}%
\special{pa 3310 788}%
\special{pa 2154 1160}%
\special{fp}%
\special{sh 1}%
\special{pa 2154 1160}%
\special{pa 2224 1158}%
\special{pa 2206 1144}%
\special{pa 2212 1120}%
\special{pa 2154 1160}%
\special{fp}%
%
\special{pn 13}%
\special{pa 3526 790}%
\special{pa 4828 1156}%
\special{fp}%
\special{sh 1}%
\special{pa 4828 1156}%
\special{pa 4770 1118}%
\special{pa 4778 1142}%
\special{pa 4758 1156}%
\special{pa 4828 1156}%
\special{fp}%
\special{pa 4744 1124}%
\special{pa 4744 1124}%
\special{fp}%
%
\special{pn 13}%
\special{pa 4828 1290}%
\special{pa 3526 1660}%
\special{fp}%
\special{sh 1}%
\special{pa 3526 1660}%
\special{pa 3596 1660}%
\special{pa 3578 1644}%
\special{pa 3586 1622}%
\special{pa 3526 1660}%
\special{fp}%
%
\special{pn 13}%
\special{pa 3404 1290}%
\special{pa 3400 1672}%
\special{fp}%
\special{sh 1}%
\special{pa 3400 1672}%
\special{pa 3422 1606}%
\special{pa 3402 1618}%
\special{pa 3382 1604}%
\special{pa 3400 1672}%
\special{fp}%
\special{pa 2160 1282}%
\special{pa 2748 1656}%
\special{fp}%
\special{sh 1}%
\special{pa 2748 1656}%
\special{pa 2702 1604}%
\special{pa 2704 1628}%
\special{pa 2682 1638}%
\special{pa 2748 1656}%
\special{fp}%
\special{pa 3300 1288}%
\special{pa 2848 1668}%
\special{fp}%
\special{sh 1}%
\special{pa 2848 1668}%
\special{pa 2912 1640}%
\special{pa 2890 1634}%
\special{pa 2886 1610}%
\special{pa 2848 1668}%
\special{fp}%
\special{pa 3210 1284}%
\special{pa 2108 1664}%
\special{fp}%
\special{sh 1}%
\special{pa 2108 1664}%
\special{pa 2178 1660}%
\special{pa 2158 1646}%
\special{pa 2166 1622}%
\special{pa 2108 1664}%
\special{fp}%
%
\special{pn 13}%
\special{pa 2014 1284}%
\special{pa 1482 1666}%
\special{fp}%
\special{sh 1}%
\special{pa 1482 1666}%
\special{pa 1548 1644}%
\special{pa 1526 1636}%
\special{pa 1524 1612}%
\special{pa 1482 1666}%
\special{fp}%
\special{pa 1850 1984}%
\special{pa 1850 1984}%
\special{fp}%
%
\special{pn 13}%
\special{pa 3536 1288}%
\special{pa 4612 1674}%
\special{fp}%
\special{sh 1}%
\special{pa 4612 1674}%
\special{pa 4556 1632}%
\special{pa 4562 1656}%
\special{pa 4542 1670}%
\special{pa 4612 1674}%
\special{fp}%
\put(27.6200,-22.5000){\makebox(0,0){$(2)$}}%
\put(34.0000,-22.5300){\makebox(0,0){$(3)$}}%
%
\special{pn 13}%
\special{pa 2732 1804}%
\special{pa 2732 2180}%
\special{fp}%
\special{sh 1}%
\special{pa 2732 2180}%
\special{pa 2752 2112}%
\special{pa 2732 2126}%
\special{pa 2712 2112}%
\special{pa 2732 2180}%
\special{fp}%
%
\special{pn 13}%
\special{pa 3394 1808}%
\special{pa 3394 2184}%
\special{fp}%
\special{sh 1}%
\special{pa 3394 2184}%
\special{pa 3414 2118}%
\special{pa 3394 2132}%
\special{pa 3374 2118}%
\special{pa 3394 2184}%
\special{fp}%
%
\special{pn 13}%
\special{pa 4020 1806}%
\special{pa 4020 2182}%
\special{fp}%
\special{sh 1}%
\special{pa 4020 2182}%
\special{pa 4040 2116}%
\special{pa 4020 2130}%
\special{pa 4000 2116}%
\special{pa 4020 2182}%
\special{fp}%
%
\special{pn 8}%
\special{pa 4650 1806}%
\special{pa 4650 2182}%
\special{dt 0.045}%
\special{sh 1}%
\special{pa 4650 2182}%
\special{pa 4670 2116}%
\special{pa 4650 2130}%
\special{pa 4630 2116}%
\special{pa 4650 2182}%
\special{fp}%
\put(40.1900,-22.5300){\makebox(0,0){$(4)$}}%
\put(46.4900,-22.5300){\makebox(0,0){$(5)$}}%
%
\special{pn 13}%
\special{pa 2260 1808}%
\special{pa 4370 2180}%
\special{fp}%
\special{sh 1}%
\special{pa 4370 2180}%
\special{pa 4308 2148}%
\special{pa 4318 2170}%
\special{pa 4302 2188}%
\special{pa 4370 2180}%
\special{fp}%
%
\special{pn 13}%
\special{pa 3558 1804}%
\special{pa 4506 2182}%
\special{fp}%
\special{sh 1}%
\special{pa 4506 2182}%
\special{pa 4452 2140}%
\special{pa 4456 2162}%
\special{pa 4438 2176}%
\special{pa 4506 2182}%
\special{fp}%
%
\special{pn 13}%
\special{pa 4534 1808}%
\special{pa 3546 2190}%
\special{fp}%
\special{sh 1}%
\special{pa 3546 2190}%
\special{pa 3616 2184}%
\special{pa 3596 2170}%
\special{pa 3602 2146}%
\special{pa 3546 2190}%
\special{fp}%
\special{pa 3894 1804}%
\special{pa 3468 2184}%
\special{fp}%
\special{sh 1}%
\special{pa 3468 2184}%
\special{pa 3530 2154}%
\special{pa 3508 2148}%
\special{pa 3504 2126}%
\special{pa 3468 2184}%
\special{fp}%
\special{pa 4464 1800}%
\special{pa 2932 2180}%
\special{fp}%
\special{sh 1}%
\special{pa 2932 2180}%
\special{pa 3002 2182}%
\special{pa 2984 2166}%
\special{pa 2992 2144}%
\special{pa 2932 2180}%
\special{fp}%
\put(40.1900,-27.5400){\makebox(0,0){$(1)$}}%
%
\special{pn 13}%
\special{pa 4650 2322}%
\special{pa 4082 2700}%
\special{fp}%
\special{sh 1}%
\special{pa 4082 2700}%
\special{pa 4150 2680}%
\special{pa 4126 2670}%
\special{pa 4126 2646}%
\special{pa 4082 2700}%
\special{fp}%
%
\special{pn 8}%
\special{pa 3426 2314}%
\special{pa 3966 2706}%
\special{dt 0.045}%
\special{sh 1}%
\special{pa 3966 2706}%
\special{pa 3924 2650}%
\special{pa 3924 2674}%
\special{pa 3900 2682}%
\special{pa 3966 2706}%
\special{fp}%
\end{picture}%
\]
\caption{Every arrow means the equivariant blowing-up along a $T$-invariant 
curve and every dotted arrow means
the equivariant  blowing-up along a $T$-invariant point.}\label{pic18a} 
\end{figure}
%
\end{theorem}

The corresponding eighteen doubly $\Z$-weighted triangulations of $S^2$  
are given in Figure \ref{pic18}. A segment attached to an oval 
corresponds to the $T$-invariant smooth 
curve of the center of a blowing-up 
appearing in Figure \ref{pic18a}.
The number, like "(5)" in Figure \ref{pic18}(1), 
near the oval is the number of the Fano $3$-fold obtained 
from the blowing-up. 

For instance, the oval in Figure \ref{pic18}(1) 
means that if we blow up along the curve 
corresponding to the segment with the oval, we obtain the Fano $3$-fold 
in (5). Of course, in this case, by symmetry 
we can choose any other segments, or any other 
$T$-invariant smooth  curves,
 as the blowing-up center. 

A dark gray small circle at a vertex corresponds
 to the exceptional $T$-invariant 
divisor of a blowing-down 
appearing in Figure \ref{pic18a}.
The number, like "(1)" in Figure \ref{pic18}(5), 
near the small circle is the number of the Fano $3$-fold obtained 
from the blowing-down.

For instance, the small circle in 
Figure \ref{pic18}(5) means that if we blow down the $T$-invariant 
divisor corresponding to the vertex with the small circle, 
we obtain the Fano $3$-fold in (1).

In Figure \ref{pic18}, 
we do not indicate the point of a blowing-up center, or 
the exceptional divisor of a blowing-down to a point,
since we do not need it afterwards.
\begin{figure}[p]
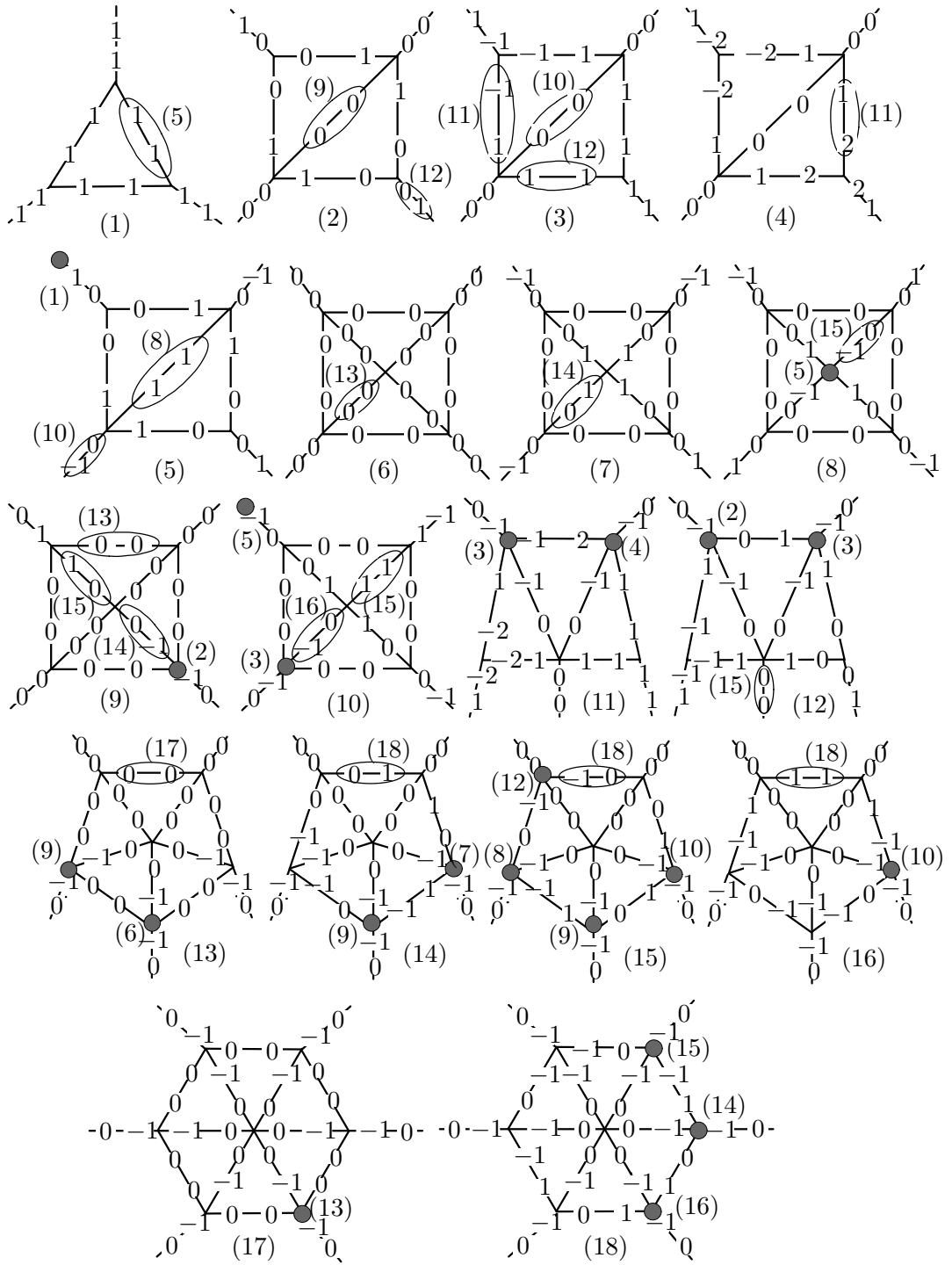

\[
\unitlength 0.1in
%
\]
\caption{Doubly $\Z$-weighted triangulations of $S^2$ 
\cite[page 91]{Od88}}\label{pic18} 
\end{figure}


\subsection{Frobenius push-forward}\label{sub:Frob}
In this subsection, we explain how to compute the direct summands of 
Frobenius push-forward of line bundles on smooth 
complete toric varieties, following 
Thomsen \cite{Th00}. 
 
Fix a positive integer $m$ and
we define a new lattice $N'$
as $N':=\frac{1}{m}N$ and denote its dual by $M'$,
We consider the natural inclusion 
$f_m\colon N \hookrightarrow N'$, which 
sends a cone in $N_{\R}$ to the cone with the same support on $N'_{\R}$.
Thus $f_{m}$ induces
the finite surjective toric morphism 
$F_{ m}\colon X(\Delta)\to X(\Delta)$ which we call a
\emph{Frobenius} (or \emph{multiplication}) \emph{map}.
We put 
$$
\mc V(\Delta)=\{ \mbi v_1,\ldots,\mbi v_l\},
$$
and $D_i$ to be the $T$-invariant 
divisor corresponding to
 the $1$-dimensional cone generated by $\mbi v_i$.
Henceforth, without otherwise specified,
we always assume that $\Delta$ is a complete smooth fan, i.e. 
$X=X(\Delta)$ is a smooth complete toric variety.
We put
$$
A=\t(\mbi v_1,\ldots ,\mbi v_l)\in M(l,n).
$$
 
If $D=\sum_{j=1}^lb_jD_j$ is a $\Q$-divisor, we define
$$\lceil D\rceil :=\sum_{j=1}^l\lceil b_i\rceil D_j,$$
where for any real number $x$, $\lceil x\rceil$ is the integer
defined by $x\leq\lceil x\rceil < x+1$.
Similarly, we define
$$
\lfloor D\rfloor:=\sum_{j=1}^l\lfloor b_j\rfloor D_j,
$$
where for every $x$, $\lfloor x\rfloor$ is the integer defined by
$x-1<\lfloor x\rfloor\leq x$.
$K_X$ denotes the canonical divisor $-\sum_{j=1}^lD_j$
so that $\omega_X=\mc O_X(K_X)$.

For a maximal cone 
$$
\sigma=\Span{\mbi v_{i_1},\ldots,\mbi v_{i_n}}\in \Delta \quad (i_1<\cdots <i_n)
$$
and a matrix
$$
B =\t(\mbi b_1,\ldots, \mbi b_l) \in M(l,m) \quad (l\geq i_n, m\ge 1),
$$
we define
$$
B_\sigma =\t(\mbi b_{i_1},\ldots ,\mbi b_{i_n})\in M(n,m).
$$ 

Then note that for a maximal cone $\sigma$ and the matrix $A$ defined above,
$A_\sigma$ belongs to $\GL (n,\Z)$, since $X$ is a smooth toric variety.

Put
$$
P_m^p=\{ \t(u_1,\ldots ,u_p)\in \Z^p \bigm| 0 \leqq u_i <m \}
$$
for a positive integer $p$. 
For 
$
\mbi u\in P_m^n,\quad \mbi w=\t (w_1,\ldots ,w_l)\in\Z^l
$
and a maximal cone $\sigma\in \Delta$,
define 
$\mbi q^m(\mbi u,\mbi w,\sigma)\in \Z^l,\mbi r^m(\mbi u,\mbi w,\sigma)
\in P_m^l$ and 
$q_i^m(\mbi u,\mbi w,\sigma)\in \Z$
as
\begin{equation}\label{eqn:AA}
AA_{\sigma}^{-1}(\mbi u-\mbi w_\sigma)+\mbi w
=m\mbi q^m(\mbi u,\mbi w,\sigma)+\mbi r^m(\mbi u,\mbi w,\sigma)
\end{equation}
and
\begin{align*}
&\mbi q^m(\mbi u,\mbi w,\sigma)
=\t (q_1^m(\mbi u,\mbi w,\sigma),\ldots ,q_l^m(\mbi u,\mbi w,\sigma)).
\end{align*}
Define 
$$
D_{\mbi u,\mbi w,\sigma}(=D_{\mbi u,\mbi w,\sigma}^m)
:=\sum q_i^m(\mbi u,\mbi w,\sigma)D_i.
$$


\begin{rem}
(i) Suppose that 
$$
\mbi a-\mbi b=A\mbi u
$$
for $\mbi a,\mbi b\in \Z^ l$ and $\mbi u\in \Z ^n$. 
Then we know that 
$$
\sum_{i=1}^l a_iD_i-\sum_{i=1}^l b_iD_i=\Div \chi ^{\mbi u}.
$$
In particular the divisors $\sum_{i=1}^l a_iD_i$ and 
$\sum_{i=1}^l b_iD_i$ are linearly equivalent.

(ii)
We have 
$\Div \chi^{A_\sigma^{-1}\mbi q}|_{U_\sigma}
=\sum_{i=1}^n q_iD_i|_{U_\sigma}$ 
for any 
$\mbi q= \t(q_1,\ldots, q_n) \in \Z^n$.
\end{rem}


\begin{exa}\label{exa:affine}
Put $R=R_\sigma$ for an $n$-dimensional non-singular strongly convex rational 
cone $\sigma$ in $N$. 
For a smooth affine toric variety $U=\Spec R$,
the multiplication map $F_m$ induces a $\C$-algebra map  
$$
F_m^{\#}\colon R \to R \quad \chi ^{\mbi u}\mapsto \chi ^{m\mbi u}.
$$ 
When we regard a quotient field $K$ of $R$ 
as an $R$-module via the map $F_m^{\#}$, we denote it by $F_{m*}K$.
For a sub-$R$-module $L$ of $K$, we also define a sub-$R$-module
$F_{m*}L$ of $F_{m*}K$, which is just $L$ as an abelian group. 

Then the module $F_{m*}(R\chi ^{-A_\sigma^{-1} \mbi w})$ for some $\mbi w \in \Z^n$
is freely generated by the set 
$$
\bigl \{ \chi^{A_\sigma^{-1}(\mbi u-\mbi w)}\bigm| \mbi u\in P_m^n \bigr\},
$$
namely, we have an isomorphism
$$
F_{m*}(R\chi ^{-A_\sigma^{-1} \mbi w})\cong 
\bigoplus_{\mbi u\in P_m^n} R\chi^{A_\sigma^{-1}(\mbi u-\mbi w)},
$$
since we also have the following description; 
$$
R=k\bigl[\chi ^{A_\sigma ^{-1}\mbi e_i} \bigm | i=1,\ldots, n  \bigr],
$$
where $\bigl \{\mbi e_1,\ldots, \mbi e_n \bigr \}$ is the standard basis of $M$.
\end{exa}

The isomorphism on an affine piece in Example \ref{exa:affine} 
can be globalized as follows in (ii).


\begin{lem}\label{lem:FmO}
Fix a vector
$
\mbi w=\t (w_1,\ldots ,w_l)\in\Z^l
$
and a maximal cone $\sigma\in \Delta$.
\begin{enumerate}
\item{\cite{Th00}}
The vector bundle 
$$
\bigoplus _{\mbi u\in P_m^n}\mc O_X(D_{\mbi u,\mbi w,\sigma})
$$
does not depend on the choice of a maximal cone $\sigma$.
\item{\cite{Th00}}
We have
$$
\bigoplus _{\mbi u\in P_m^n}\mc O_X(D_{\mbi u,\mbi w,\sigma})\cong
F_{m*}\mc O_X(\sum w_iD_i).
$$
\item 
For a line bundle $\mc L\in \Pic X$, we have
$$(F_{m*}\mc L)^\vee
 \cong F_{m*}(\mc L^\vee\otimes \omega_X^{1-m})
 \cong F_{m*}(\mc L^\vee\otimes \omega_X)\otimes \omega_X ^{-1}. 
$$
\end{enumerate}
\end{lem}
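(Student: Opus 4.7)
The assertions (i) and (ii) are stated as being due to Thomsen, so I would simply cite \cite{Th00}; more intrinsically, one checks that over each maximal chart $U_\sigma$ the description in Example \ref{exa:affine} expresses $F_{m*}\mc O_X(\sum w_iD_i)|_{U_\sigma}$ as $\bigoplus_{\mbi u\in P_m^n}\mc O_{U_\sigma}(D_{\mbi u,\mbi w,\sigma})$, and that the transition data on overlaps $U_\sigma\cap U_{\sigma'}$ glue these local summands into a globally defined line bundle that does not depend on $\sigma$ (this is where independence in (i) comes from).

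The main point is therefore (iii), which I would prove by Grothendieck duality for the finite flat morphism $F_m\colon X\to X$. First I would recall that on a smooth complete toric variety the $m$-th multiplication map $F_m$ is a finite flat morphism between smooth varieties, so its relative dualizing sheaf is the line bundle $\omega_{F_m}=\omega_X\otimes F_m^*\omega_X^{-1}$. The key computation is that $F_m$ multiplies every $T$-invariant divisor by $m$; indeed the associated lattice map $f_m\colon N\hookrightarrow N'=\tfrac1m N$ sends the primitive generator of a ray to $m$ times itself in $N'$, so $F_m^*D_i=mD_i$ and consequently
\[
F_m^*\omega_X \;=\; \mc O_X\!\left(-m\sum D_i\right) \;=\; \omega_X^{m},\qquad \omega_{F_m}\;=\;\omega_X\otimes\omega_X^{-m}\;=\;\omega_X^{1-m}.
\]

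With this in hand, Grothendieck duality for the finite flat morphism $F_m$ gives, for any line bundle $\mc L$, an isomorphism
\[
(F_{m*}\mc L)^\vee \;\cong\; F_{m*}\mcRHom(\mc L, F_m^!\mc O_X)
\;\cong\; F_{m*}(\mc L^\vee\otimes \omega_{F_m})
\;\cong\; F_{m*}(\mc L^\vee\otimes\omega_X^{1-m}),
\]
which is the first claimed isomorphism. For the second, I would apply the projection formula: since $F_m^*\omega_X^{-1}=\omega_X^{-m}$,
\[
F_{m*}(\mc L^\vee\otimes\omega_X)\otimes \omega_X^{-1}
\;\cong\; F_{m*}\!\bigl(\mc L^\vee\otimes\omega_X\otimes F_m^*\omega_X^{-1}\bigr)
\;\cong\; F_{m*}(\mc L^\vee\otimes\omega_X^{1-m}).
\]

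The only mildly delicate step is the identification $\omega_{F_m}=\omega_X^{1-m}$, which comes down to the bookkeeping $F_m^*D_i=mD_i$; everything else is a formal consequence of Grothendieck duality and the projection formula, so I do not expect any serious obstacle once flatness of $F_m$ is invoked.
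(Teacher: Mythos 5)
Your proof is correct, but for the first isomorphism in (iii) it takes the route the paper deliberately sidesteps. The paper itself remarks that the first isomorphism "is a direct consequence of the Grothendieck--Verdier duality," which is exactly your argument: $F_m$ is finite flat (since $F_{m*}\mc O_X$ is locally free), $F_m^*D_i=mD_i$ gives $\omega_{F_m}=\omega_X\otimes F_m^*\omega_X^{-1}=\omega_X^{1-m}$, and duality for the finite morphism yields $(F_{m*}\mc L)^\vee\cong F_{m*}(\mc L^\vee\otimes\omega_X^{1-m})$; this is sound. The paper instead chooses to give "another proof by the use of the above result," i.e.\ a purely combinatorial one: writing $\mc L=\mc O_X(\sum w_iD_i)$ and setting $\mbi u'=(m-1)\,\t(1,\ldots,1)$, it checks via the floor-function formula that
$q_i^m(\mbi u'_{\sigma_0}-\mbi u,\,\mbi u'-\mbi w,\,\sigma_0)=-\,q_i^m(\mbi u,\mbi w,\sigma_0)$
for every $\mbi u\in P_m^n$, so the line-bundle summands of $(F_{m*}\mc L)^\vee$ furnished by (ii) are matched, summand by summand, with those of $F_{m*}(\mc L^\vee\otimes\omega_X^{1-m})$. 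The second isomorphism is handled identically in both treatments, by the projection formula together with $F_m^*\omega_X^{-1}=\omega_X^{-m}$ (in your version) or directly (in the paper's). What each approach buys: yours is shorter and conceptual but needs flatness of $F_m$ and the identification of the relative dualizing sheaf; the paper's stays entirely inside Thomsen's explicit decomposition, which matches the style of the later sections where the sets $\mk D(D)_m$ are computed by exactly this kind of bookkeeping. For (i) and (ii) both you and the paper simply cite \cite{Th00}, so there is nothing to compare there.
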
 

\begin{proof}
(iii) The second isomorphism follows from the projection formula.
The first one is a direct consequence of the 
Grothendieck--Verdier duality (cf.~\cite[page 86]{Hu06}), but
we give another proof by the use of the above result. 
Put $\mc L=\mc O_X(\sum w_iD_i)$. We have 
$\mc L^\vee\otimes \omega_X^{1-m}=\mc O_X(\sum (m-1-w_i)D_i)$.
Put $\mbi u'=(m-1)\t (1, 1, \ldots , 1)$. Then, for all $\mbi u\in P_m^n$, we can see
\begin{align*}
 q_i(\mbi u'_{\sigma_0}-\mbi u,\mbi u'-\mbi w,\sigma _0) 
=&\lfl \frac{\t\mbi v_i(\mbi u'_{\sigma_0}-\mbi u-\mbi u'_{\sigma_0}
+ \mbi w_{\sigma_0})-w_i+m-1}{m}\rfl \\
=&\lfl -\frac{\t\mbi v_i(\mbi u-\mbi w_{\sigma_0})+w_i+1}{m} \rfl +1\\
=&-\lce \frac{\t\mbi v_i(\mbi u-\mbi w_{\sigma_0})+w_i}{m}+\frac{1}{m} \rce+1\\
=&- q_i(\mbi u,\mbi w,\sigma _0).
\end{align*}
The last equality holds because, in general, the equality  
$\lce \frac{k}{m}+\frac{1}{m}\rce - \lfl\frac{k}{m}\rfl=1$ is true for $k\in \Z$.
This gives the first isomorphism by (iii).
\end{proof}

Below for simplicity, we often identify two isomorphic line bundles. 
For a $T$-invariant divisor $D$ and an integer $m>0$,
we define sets of (isomorphism classes of) line bundles;
$$
\mk D(D)_m:=\{\mc L \in \Pic X \bigm| 
\mc L \mbox{ is a direct summand of $F_{m*}\mc O_X(D)$ } \}
$$
and 
$$
\mk D(D):=\cup _{m>0}\mk D(D)_m.
$$

\paragraph{Convention}
\begin{enumerate}
\item
We may assume that $\mbi v_{1},\ldots,\mbi v_{n}$ forms a standard basis of $\Z ^n$
and put
$\sigma_0=\Span{\mbi v_{1},\ldots,\mbi v_{n}}$.
We often omit $\sigma_0$ in the notation as 
$\mbi q^m(\mbi u,\mbi w)(:=\mbi q^m(\mbi u,\mbi w,\sigma_0))$,
$D_{\mbi u,\mbi w}(:=D_{\mbi u,\mbi w,\sigma_0})$ and so on.
\item
For a zero divisor $D=0$ or a zero vector $\mbi w=\mbi 0$, 
we simply denote $\mk D(0)$ by $\mk D$ (or $\mk D _X$ if 
we need to specify the base variety $X$) and 
$\mbi q^m(\mbi u)(=\mbi q^m(\mbi u,\mbi 0))$.
(In fact, as a consequence of Lemma \ref{lem:mgg}(i) and (ii), we have 
$\mk D=\mk D(0)_m$ for a sufficiently divisible integer $m$.) 
\end{enumerate}

We take $\sigma$ in (\ref{eqn:AA}) to be $\sigma_0$, then 
(\ref{eqn:AA}) becomes the following simpler form
\begin{equation}\label{eqn:AA2}
A(\mbi u-\mbi w_{\sigma_0})+\mbi w
=m\mbi q^m(\mbi u,\mbi w)+\mbi r^m(\mbi u,\mbi w),
\end{equation}
and hence we have
\begin{equation}\label{eqn:AA2i}
q^m_i(\mbi u,\mbi w)=\lfl \frac{\t\mbi v_i(\mbi u-\mbi w_{\sigma_0})+w_i}{m} \rfl .
\end{equation}
%


\begin{lem}\label{lem:mgg}
Fix a $T$-invariant divisor $D=\sum w_i D_i$
and put $\mbi w=\t (w_1,\ldots ,w_l)$.
\begin{enumerate}
\item \cite{Th00}
The set $\mk D(D)$ is finite. 
\item
Put $D':=\sum w_i'D_i$, where 
$$
w_i':=
\begin{cases}
0& \mbox{ for } i \mbox{ with } w_i\ge 0 \\
-1& \mbox{ for } i \mbox{ with } w_i <0. \\
\end{cases}
$$ 
Take $m>0$ satisfying $-1\le \frac{w_i}{m} <1$ for any $i$.
Then $\mc O_X(D')\in \mk D(D)_m$. 
Furthermore we have
$\mk D(D')\subset \mk D(D)_m$ for sufficiently divisible integers $m>0$.
\item
We have $\mk D(D)_m\subset \mk D(lD)_{lm}$ for any $l,m\in \Z_{>0}$.
\end{enumerate}
\end{lem}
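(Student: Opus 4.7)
The plan is to treat the three parts separately. Since (i) is cited from Thomsen, I focus on (ii) and (iii). I would attack (iii) first (it is purely a substitution), then handle the first assertion of (ii) by an explicit choice of $\mbi u$, and finally deduce the "furthermore" claim by iterating the first assertion.

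For (iii), given $\mbi u \in P_m^n$, set $\mbi u' := l\mbi u$, which lies in $P_{lm}^n$ since $u_i\in [0,m)$ implies $lu_i\in [0,lm)$. A direct substitution into (\ref{eqn:AA2i}) gives
\[
q_i^{lm}(l\mbi u, l\mbi w)=\lfl \frac{l\t\mbi v_i(\mbi u-\mbi w_{\sigma_0})+lw_i}{lm}\rfl =q_i^m(\mbi u,\mbi w),
\]
so $D^m_{\mbi u,\mbi w,\sigma_0}=D^{lm}_{l\mbi u,l\mbi w,\sigma_0}$ as divisors, yielding the desired inclusion of line bundles.

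For the first assertion of (ii), I choose $\mbi u\in P_m^n$ component-wise (for $i=1,\ldots,n$) by $u_i:=w_i$ if $w_i\ge 0$ and $u_i:=w_i+m$ if $w_i<0$; the hypothesis $-1\le w_i/m <1$ guarantees $u_i\in[0,m)$. Then $\mbi u-\mbi w_{\sigma_0}=m\mbi t$ for a vector $\mbi t\in\{0,1\}^n$, and (\ref{eqn:AA2i}) evaluates cleanly to
\[
q_i^m(\mbi u,\mbi w)=\t\mbi v_i\,\mbi t+\lfl w_i/m\rfl =\t\mbi v_i\,\mbi t+w_i',
\]
where the last equality again uses the range hypothesis on $w_i/m$. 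Hence $\sum_i q_i^m D_i$ and $D'$ differ by $\Div\chi^{\mbi t}$, so the $\mbi u$-summand of the decomposition in Lemma \ref{lem:FmO}(ii) is isomorphic to $\mc O_X(D')$.

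For the "furthermore" part, the key observation is that the first assertion applied to $D'$ itself is self-reproducing: since $w_i'\in\{0,-1\}$ one has $(D')'=D'$ and $-1\le w_i'/k<1$ for every $k\ge 1$, so the first assertion gives $\mc O_X(D')\in \mk D(D')_k$ for \emph{every} $k\ge 1$. Therefore $F_{k*}\mc O_X(D')$ is a direct summand of $F_{k'*}F_{k*}\mc O_X(D')=F_{kk'*}\mc O_X(D')$, establishing the monotonicity $\mk D(D')_k\subset \mk D(D')_{kk'}$. Combined with the finiteness (i) of $\mk D(D')$, there exists $K_0$ such that $\mk D(D')=\mk D(D')_K$ for all $K$ divisible by $K_0$. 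Applying $F_{K*}$ to the direct summand inclusion $\mc O_X(D')\hookrightarrow F_{m*}\mc O_X(D)$ from the first assertion then gives $F_{K*}\mc O_X(D')$ as a direct summand of $F_{Km*}\mc O_X(D)$, so $\mk D(D')=\mk D(D')_K\subset \mk D(D)_{Km}$, proving the claim for all sufficiently divisible multiples of $K_0m$. The main subtlety I anticipate is exactly this monotonicity step; it would fail for a general divisor $E$ in place of $D'$, and is rescued here only because $(D')'=D'$, which is what makes the self-bootstrapping work.
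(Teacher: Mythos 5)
Your proof is correct and follows essentially the same route as the paper: the same explicit choice $\mbi u=\mbi w_{\sigma_0}+m\mbi t$ (the paper's $m\mbi u'+\mbi w_{\sigma_0}$) for the first assertion of (ii), and the same identity $q_i^m(\mbi u,\mbi w)=q_i^{lm}(l\mbi u,l\mbi w)$ for (iii). For the ``furthermore'' clause, your stabilization of $\mk D(D')_K$ via $(D')'=D'$ is just a more explicit bookkeeping of the paper's terse argument, which likewise rests on $F_{km*}=F_{k*}F_{m*}$, the first assertion applied again, and the finiteness in (i).
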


\begin{proof}
(i) Since the set $\bigl\{\frac{\t\mbi v_i\mbi u}{m}\bigm | \mbi u \in P_m^n, m\in \Z_{>0} \bigr \}$
is bounded and 
$\frac{\t\mbi v_i\mbi w_{\sigma_0}-w_i}{m}\to 0$ as $m\to \infty$,
the set of integers $\bigl\{ q^m_i(\mbi u,\mbi w ) \bigm| \mbi u \in P_m^n, m\in \Z_{>0} \bigr\}$
is finite. 
Consequently, so is the set $\mk D(D)$. 

(ii)
For any $\mbi w_{\sigma_0}\in \Z^n$, there is a vector $\mbi u'\in \Z^n$ such that 
$m\mbi u'+\mbi w_{\sigma_0}\in P_m^n$. Put $\mbi u:=m\mbi u'+\mbi w_{\sigma_0}$.
Then we can see 
\begin{align*}
q^m_i(\mbi u,\mbi w)
=&\lfl \frac{\t\mbi v_i(\mbi u-m\mbi u'-\mbi w_{\sigma_0})+ w_i}{m} \rfl 
+ \frac{m\t\mbi v_i \mbi u'}{m}\\
=&\lfl \frac{w_i}{m} \rfl+\t\mbi v_i \mbi u'
=  w _i'+\t\mbi v_i \mbi u',
\end{align*}
which means the divisor
$D_{\mbi u,\mbi w}$ is linearly equivalent to $D'$. 
Thus $\mc O_X(D')\in \mk D(D)_m$.

Take an element $\mc L\in \mk D(D')$ and 
suppose that $\mc L\in \mk D(D)_m$ for some $m$ 
satisfying  $-1\le \frac{w_i}{m} <1$ for any $i$.
Then $\mc L\in \mk D(D)_{km}$ for all $k>0$, 
since $F_{km*}\mc O_X(D)= F_{m*}F_{k*}\mc O_X(D)$.
This gives the last assertion.

(iii) By definition, we have $q^m_i(\mbi u,\mbi w)=q^{lm}_i(l\mbi u,l\mbi w)$,
which implies the conclusion.
\end{proof}

My optimistic conjecture is as follows;


\begin{conj}\label{conj:Bondal}
Let $X$ be a smooth complete toric variety.
Then $F_{m*}\mc O_X(D)$ is a classical generator of $D^b(X)$ for
any $T$-invariant divisors $D$ and a sufficiently large integer $m$. 
\end{conj}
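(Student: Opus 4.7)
The plan is two-step: first reduce to the case $D = 0$, and then attack the reduced statement (essentially Bondal's original conjecture) inductively via the toric Minimal Model Program. For the reduction, Lemma \ref{lem:mgg}(ii) shows that $\mk D(D)_m$ contains $\mk D(D')$ for $D' = -\sum_{w_i<0}D_i$ (entries in $\{-1,0\}$), and Lemma \ref{lem:mgg}(iii) combined with the identity $F_{km*} = F_{m*}F_{k*}$ propagates this to arbitrarily divisible $m$. I would aim to show that for sufficiently large $m$, the set $\mk D(D)_m$ contains a fixed twist of $\mk D_X$ by a line bundle depending only on $D$; this would reduce the problem to showing that $\mk D_X$ classically generates $D^b(X)$.

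For the reduced claim, the natural strategy is induction on $\dim X + \rho(X)$. When $X$ admits a divisorial extremal contraction $f: X \to Y$ to a smooth toric variety, or a Mori fiber structure over a smooth toric base, I would extend Lemmas \ref{lem:f_*} and \ref{lem:fano3} (established in the Fano threefold setting) to prove an inclusion of the form $f^* \mk D_Y \subset \mk D_X$ after explicit twisting. Combining this with Orlov's semiorthogonal decomposition for a blowup, or the projective bundle formula in the Mori fiber case, together with the inductive hypothesis applied to $Y$, should propagate classical generation from $Y$ to $X$. The missing generators coming from the exceptional divisor (respectively from the fiber direction) would be produced by direct combinatorial inspection of the formula $q_i^m(\mbi u,\mbi w) = \lfl \frac{\t\mbi v_i(\mbi u - \mbi w_{\sigma_0}) + w_i}{m} \rfl$ from (\ref{eqn:AA2i}), showing they too appear as summands of $F_{m*}\mc O_X$.

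The principal obstacle is the existence of small (flipping) extremal contractions: a smooth projective toric variety may admit extremal rays whose contraction is small with singular target, so the naive induction on Picard rank breaks down. Working around this requires either comparing $\mk D_X$ with $\mk D_{X^+}$ across a toric flip — and unlike the flop case indicated in \S \ref{sec:flop}, no derived equivalence is expected for a genuine flip — or else bypassing the MMP altogether, perhaps by resolving into a sufficiently refined fan dominating both $X$ and $X^+$ and transferring generators back. A secondary but genuine difficulty is that Frobenius pushforward commutes with neither $f_*$ nor $f^*$, so even the inclusion $f^* \mk D_Y \subset \mk D_X$ is nontrivial: the values $\lfl \frac{\t\mbi v_i(\mbi u - \mbi w_{\sigma_0}) + w_i}{m} \rfl$ change in a controlled but intricate way under lattice refinement, and reconciling this change between $X$ and $Y$ is where most of the technical work would lie. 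In my estimation, the flip problem is the chief conceptual obstruction, while the combinatorial non-commutativity between $F_{m*}$ and toric morphisms is the chief computational one.
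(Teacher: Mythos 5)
There is a genuine gap here, and the first thing to say is that the statement you are trying to prove is stated in the paper as a \emph{conjecture}: the paper never proves Conjecture \ref{conj:Bondal} in general, and Remark \ref{rem:Bondal} even notes that no proof is available for the announced case $D=0$. What the paper actually establishes is far narrower: classical generation of $D^b(X)$ by $F_{m*}\mc O_X$ (i.e.\ $D=0$) for smooth toric Fano $3$-folds, proved by explicit computation on the three birationally maximal ones $(11),(17),(18)$ in \S\ref{section:Examples}--\S\ref{sec:Bondal} and then pushed \emph{down} along equivariant blow-downs via Lemma \ref{lem:f_*} (generation descends under $\mb Rf_*$ because $\mb Rf_*F_{m*}\mc O_X=F_{m*}\mc O_Y$). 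So there is no ``paper proof'' for your statement to be measured against; your text is a research program, and it does not close the conjecture.

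Within your program the gaps are concrete. First, the reduction to $D=0$ is not achieved: Lemma \ref{lem:mgg}(ii) only reduces to divisors with coefficients in $\{0,-1\}$ and only yields containments of the sets of summands ($\mc O_X(D')\in\mk D(D)_m$, $\mk D(D')\subset\mk D(D)_m$); knowing that $\mk D(D)_m$ contains some twist of $\mk D_X$ is exactly the statement you would still have to prove, and containment of summand sets does not by itself transfer classical generation. Second, your induction runs in the opposite direction from the paper's: you want to pass from $Y$ (smaller Picard rank) up to $X$ via $f^*\mk D_Y\subset\mk D_X$ plus Orlov's blow-up formula, whereas the paper proves generation at the top and descends; the upward step requires producing the exceptional-divisor generators inside $\mk D_X$, which is unproven, and Lemma \ref{lem:birational0} gives the pushforward relation $f_*\mc O_X(D^X_{\mbi u})=\mc O_Y(D^Y_{\mbi u})$, not the inclusion you need. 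Third, as you yourself concede, the induction collapses at small (flipping) contractions, and no substitute argument is offered. Finally, the paper's key device for proving generation on the maximal Fano $3$-folds---that $\omega_X^{-3}$ is ample, so $\bigoplus_{i=0}^3\omega_X^{-i}$ is a generator by \cite[Lemma 3.2.2]{VdB04}, combined with $\Span{\mk D}=\Span{\mk D(\omega_X^{-3})_m}$---is unavailable for a general smooth complete toric variety, which is precisely why the general statement remains open. As written, your proposal identifies the right obstructions but does not overcome them, so it cannot be accepted as a proof.
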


In order to prove Conjecture \ref{conj:Bondal}, 
by Lemma \ref{lem:mgg}(ii) it is essential to show it for
$T$-invariant divisors $D=\sum w_i D_i$
with $w_i= 0$ or $-1$. 

\begin{rem}\label{rem:Bondal}
Bondal announced in \cite{Bo06} that Conjecture \ref{conj:Bondal} 
is true for the case $D=0$.
Although the proof is not available so far,
several people have already used this statement (cf.~\cite{BT10,CM10,CM,DLM10}).
In this article, we refer this statement as \emph{Bondal's conjecture}.
Bondal's conjecture is solved for $2$-dimensional toric Deligne--Mumford stacks in \cite{OU13}.
\end{rem}

Lemma \ref{lem:mgg} will not be used afterwards, but 
an idea to solve Bondal's conjecture 
in \S \ref{sub:110} and \ref{sub:180} comes from it.

The following is sometimes powerful when we show that $F_{m*}\mc O_X$
is a tilting object.

\begin{lem}\label{lem:vani}
Take a line bundle $\mc L$ on $X$.
\begin{enumerate}
\item If $\mc L^{-1}$ is nef, then we have
$\Ext^i_X(\mc L, F_{m*}\mc O_X)=0$ for $i>0$.
\item\cite{Sa09}
If $\mc L\otimes \omega^{-1}_X$ is ample,
then we have
$\Ext^i_X(F_{m*}\mc O_X,\mc L)=0$ for $i>0$.
\end{enumerate}
\end{lem}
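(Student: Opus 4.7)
\medskip
\noindent\textbf{Proof proposal.} The plan for both parts is the same: rewrite the Ext group as an ordinary cohomology group on $X$ using the projection formula and the fact that the toric multiplication map $F_m$ is finite (so $R^j F_{m*}=0$ for $j>0$), using also that $F_m^*\mc O_X(D)=\mc O_X(mD)$ for any $T$-invariant divisor $D$; then invoke a standard vanishing theorem on the smooth projective toric variety $X$. The main fact we rely on, besides the projection formula, is Lemma \ref{lem:FmO}(iii) computing the dual $(F_{m*}\mc L)^\vee$, which lets us put the Ext group into a form where Kodaira vanishing applies directly.

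For (i), I would first write
$$
\Ext^i_X(\mc L,F_{m*}\mc O_X)\cong H^i(X,\mc L^{-1}\otimes F_{m*}\mc O_X)
 \cong H^i(X,F_{m*}(F_m^*\mc L^{-1}))\cong H^i(X,\mc L^{-m}),
$$
where the second isomorphism is the projection formula and the third uses that $F_m$ is finite together with $F_m^*\mc L^{-1}\cong\mc L^{-m}$. Since $\mc L^{-1}$ is nef, so is $\mc L^{-m}$, and Demazure vanishing for nef line bundles on smooth complete toric varieties gives the claim.

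For (ii), I would combine Lemma \ref{lem:FmO}(iii), which yields $(F_{m*}\mc O_X)^\vee\cong F_{m*}(\omega_X^{1-m})$, with the projection formula to compute
$$
\Ext^i_X(F_{m*}\mc O_X,\mc L)\cong H^i(X,F_{m*}(\omega_X^{1-m})\otimes\mc L)
\cong H^i\bigl(X,\omega_X\otimes(\mc L\otimes\omega_X^{-1})^{\otimes m}\bigr),
$$
the last step again using finiteness of $F_m$ and $F_m^*\mc L=\mc L^m$. Since $\mc L\otimes\omega_X^{-1}$ is ample, so is its $m$-th tensor power, and Kodaira vanishing on the smooth projective variety $X$ gives the desired vanishing for $i>0$.

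There is no substantive obstacle here, since both parts are short formal manipulations once one knows (a) the multiplication map $F_m$ is finite with $F_m^*\mc O_X(D)=\mc O_X(mD)$, (b) the identity of Lemma \ref{lem:FmO}(iii), and (c) the relevant vanishing theorems (Demazure vanishing for (i), Kodaira vanishing for (ii)). The only point that deserves care is checking that the computation of $(F_{m*}\mc O_X)^\vee$ in Lemma \ref{lem:FmO}(iii) specialises correctly to $F_{m*}(\omega_X^{1-m})$, which one reads off by setting $\mc L=\mc O_X$.
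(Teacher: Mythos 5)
Your proposal is correct, and for part (i) it is essentially the computation in the paper: the paper passes through the adjunction $F_m^*\dashv F_{m*}$ to get $\Ext^i_X(\mc L,F_{m*}\mc O_X)\cong H^i(X,\mc L^{-m})$ in one step, while you unpack the same identification via the projection formula and the degeneration of the Leray spectral sequence for the finite map $F_m$; the vanishing then follows from Demazure (nef) vanishing in both cases. For part (ii) your route is genuinely, if mildly, different: the paper applies the other adjunction $F_{m*}\dashv F_m^!$ (Grothendieck--Verdier duality) directly, landing immediately on $H^i(X,F_m^*(\mc L\otimes\omega_X^{-1})\otimes\omega_X)$ and concluding by Kodaira vanishing, whereas you first invoke Lemma \ref{lem:FmO}(iii) with $\mc L=\mc O_X$ to identify $(F_{m*}\mc O_X)^\vee\cong F_{m*}(\omega_X^{1-m})$, and then use local freeness of $F_{m*}\mc O_X$ (Thomsen's splitting into line bundles), the projection formula and finiteness of $F_m$ to arrive at the very same group $H^i\bigl(X,\omega_X\otimes(\mc L\otimes\omega_X^{-1})^{\otimes m}\bigr)$. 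What your route buys is that the only duality input is Lemma \ref{lem:FmO}(iii), for which the paper also gives a purely combinatorial proof, so no explicit appeal to $F_m^!$ is needed; the cost is that you must use that $F_{m*}\mc O_X$ is a vector bundle to rewrite the Ext group as cohomology of a twist of its dual, a fact the paper's one-line duality argument does not require. Both arguments are complete and finish with Kodaira vanishing applied to the ample bundle $(\mc L\otimes\omega_X^{-1})^{\otimes m}$.
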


\begin{proof}
(i) By adjunction, we have $\Ext^i_X(\mc L, F_{m*}\mc O_X)=H^i(X, \mc L^{-m})$.
But the last term vanishes for $i>0$, since $X$ is toric. 

(ii) We have $\Ext^i_X(F_{m*}\mc O_X,\mc L)
=H^i(X,F_m^*(\mc L\otimes \omega^{-1}_X)\otimes \omega_X)$, which vanishes 
by the Kodaira vanishing theorem.
\end{proof}

We have the following easy lemma.
Because of it, the facts that the set $\mk D_X$ 
forms a full strong exceptional collection and
that 
$F_{m*}\mc O_X$ is a tilting generator for sufficiently large $m$
are equivalent.


\begin{lem}\label{lem:tilting&exceptional}
\begin{enumerate}
\item
Let us consider a finite set of line bundles $\{\mc L_k\}$ satisfying
$\mc L_i\not \cong \mc L_j$ for $i\ne j$. 
Assume that the vector bundle $\mc E=\bigoplus \mc L_k$ 
is a \emph{tilting generator} of $D^b(X)$, namely 
it satisfies the following conditions:
\begin{enumerate}
\item 
$\Hom_X^i(\mc E,\mc E)=0$ for $i\ne 0$. Such an object $\mc E$ in 
$D^b(X)$ is called a \emph{tilting} object.
\item 
$\Span{\mc E}^{\perp}=0$ in $D^b(X)$, that is, $\mc E$ is a 
\emph{generator} of $D^b(X)$.
\end{enumerate}
Then the set $\{\mc L_i\}$
forms a full strong exceptional collection.
\item
Suppose that we have a full strong exceptional collection 
$\{\mc L_k\}$ on $X$.
Then their direct sum $\bigoplus _k\mc L_k$ is a tilting generator of $D^b(X)$.
\end{enumerate}
\end{lem}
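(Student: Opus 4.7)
The plan is to treat the two implications separately, since (ii) amounts to repackaging the definitions while (i) requires both an ordering argument and one standard appeal to general theory, the latter being the main obstacle.

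For (ii), set $\mc E=\bigoplus_k\mc L_k$. The splitting $\Hom_X^i(\mc E,\mc E)=\bigoplus_{k,j}\Hom_X^i(\mc L_k,\mc L_j)$ together with strongness yields $\Hom_X^i(\mc E,\mc E)=0$ for $i\ne 0$, so $\mc E$ is tilting; since $\Span{-}$ is closed under direct summands, $\Span{\mc E}=\Span{\mc L_1,\ldots,\mc L_n}=D^b(X)$ by fullness, so in particular $\Span{\mc E}^\perp=0$.

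For (i), exceptionality of each $\mc L_i$ and the higher Ext vanishings between distinct summands follow from the same splitting together with $\Hom_X^0(\mc L_i,\mc L_i)=H^0(\mc O_X)=\C$. To produce a valid ordering, I would use the relation $\mc L_i\preceq\mc L_j$ defined by $\Hom_X(\mc L_i,\mc L_j)\ne 0$: on the smooth projective integral variety $X$, the product of a nonzero section of an invertible sheaf $\mc M$ with one of $\mc M^{-1}$ is a nonzero scalar that trivializes $\mc M$, so applied to $\mc M=\mc L_j\otimes\mc L_i^{-1}$ the hypothesis $\mc L_i\not\cong\mc L_j$ yields antisymmetry, and tensoring nonzero sections gives transitivity. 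Any linear extension of $\preceq$ then orders the $\mc L_k$ so that $k>j$ forces $\Hom_X^0(\mc L_k,\mc L_j)=0$, which together with strongness gives an exceptional collection.

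The main obstacle is fullness: we have $\Span{\mc L_1,\ldots,\mc L_n}=\Span{\mc E}$ and must upgrade $\Span{\mc E}^\perp=0$ to $\Span{\mc E}=D^b(X)$. For this I would appeal to the standard fact (going back to Bondal) that a tilting object $\mc E$ on a smooth projective variety has endomorphism algebra $A=\End(\mc E)$ of finite global dimension, and that $\RHom(\mc E,-)$ identifies the thick subcategory $\Span{\mc E}$ with $D^b(\module A)$; the saturation of $D^b(X)$ supplied by the Serre functor $-\otimes\omega_X[\dim X]$ then forces $\Span{\mc E}$, which has trivial right orthogonal, to coincide with $D^b(X)$, completing the proof.
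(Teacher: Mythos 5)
Your part (ii) and the exceptional/strong part of (i) are fine and correspond to what the paper dismisses as immediate from the definitions; in particular your ordering argument via the preorder $\mc L_i\preceq\mc L_j$ iff $\Hom_X(\mc L_i,\mc L_j)\ne 0$ (antisymmetry and transitivity from integrality of $X$) is a correct way to produce the ``appropriate order''. Where you diverge from the paper is fullness: the paper gets it in one line from the semiorthogonal decomposition $D^b(X)=\bigl< \Span{\{\mc L_k\}}^{\perp},\Span{\{\mc L_k\}}\bigr>$ that Bondal attaches to any exceptional collection (the reference is Huybrechts, p.~25), and then kills the orthogonal using hypothesis (b). You instead route through tilting theory and $D^b(\module A)$.

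The difficulty is your last inference: saturation of the ambient category $D^b(X)$ plus $\Span{\mc E}^{\perp}=0$ does \emph{not} by itself force $\Span{\mc E}=D^b(X)$. A thick subcategory with vanishing right orthogonal can be proper even in a saturated category: on an elliptic curve the thick subcategory of complexes with torsion cohomology is proper, yet its right orthogonal is zero, since by Serre duality $\Ext^i_X(\mc O_x,F)\cong\Ext^{1-i}_X(F,\mc O_x)^{\vee}$ is nonzero for suitable $x$ whenever $F\ne 0$. What is actually needed is a property of the \emph{subcategory}: $\Span{\mc E}\simeq D^b(\module A)$ with $A$ of finite global dimension is itself saturated, hence admissible in the Hom-finite category $D^b(X)$ (Bondal--Kapranov), and this admissibility is exactly the semiorthogonal decomposition the paper invokes; then (b) finishes the proof. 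Two further cautions: the standard proof that the endomorphism algebra of a tilting generator has finite global dimension goes through the derived equivalence with $D^b(X)$, i.e.\ essentially through the fullness you are trying to prove, so to avoid circularity you should deduce finiteness of the global dimension of $A$ from the directedness of $A$ given by your ordering ($\Hom_X(\mc L_k,\mc L_j)=0$ for $k>j$), which is Bondal's original observation; and the identification of $\Span{\mc E}$ with $D^b(\module A)$ likewise needs this finiteness (a priori one only gets perfect complexes over $A$). With these repairs your route is correct, but it is a longer detour to the same admissibility statement that the paper obtains directly from the exceptional collection.
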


\begin{proof} 
The most parts of the statements are direct consequences of the definitions.
I explain only how to show the fullness in (i).

Note that the condition (1) implies that the set $\{\mc L_k\}$
is a strong exceptional collection.
Then we have a semi-orthogonal decomposition (cf.~\cite[page 25]{Hu06}) of 
$D^b(X)$ into $\Span{\{\mc L_k\}}^{\perp}$ and 
$\Span{\{\mc L_k\}}$.
Since $\mc E$ is a generator, $\Span{\{\mc L_k\}}^{\perp}=0$
which implies that the strong exceptional collection $\{\mc L_k\}$ is full. 
\end{proof}


\section{Examples}\label{section:Examples}
In this section, we determine the set $\mk D$ for various 
smooth toric varieties.

\subsection{Maximal toric del Pezzo surface}\label{sub:del}
Let us consider the toric surface $X=Y_3$ which is obtained by 
the blow up of $\PP ^2$ at the three $T$-invariant points. Namely,
$X$ is the \emph{maximal} toric del Pezzo surface with respect to 
birational relations.
 We put
\begin{align*}
\mbi v_1&=
\left( \begin{array}{c}
1\\ 0
\end{array} \right),
\mbi v_2=
\left( \begin{array}{c}
0\\ 1
\end{array}\right),
\mbi v_3=
\left( \begin{array}{c}
-1\\ 1
\end{array}\right),\\
\mbi v_4&=
\left( \begin{array}{c}
-1\\ 0
\end{array}\right),
\mbi v_5=
\left( \begin{array}{c}
0\\ -1
\end{array}\right),
\mbi v_6=
\left( \begin{array}{c}
1\\ -1
\end{array}\right)
\in \Z ^2.
\end{align*}
Then we know that $D_2,D_4,D_6$ are exceptional divisors of $X\to \PP ^2$. 
 Note that $D_1+D_6 \sim D_3+D_4$ and $D_2+D_3\sim D_5+D_6$.  

For 
$
\mbi u=
\left( \begin{array}{c}
x\\ y
\end{array}
\right)\in P_m^2,
$
we have
$$
\mbi q^m(\mbi u)=
\left( 
\begin{array}{c}
\lfl\frac{x}{m}\rfl\\ 
\lfl\frac{y}{m}\rfl\\
\lfl\frac{-x+y}{m}\rfl\\  
\lfl\frac{-x}{m}\rfl\\
\lfl\frac{-y}{m}\rfl\\
\lfl\frac{x-y}{m}\rfl\\
\end{array} \right)
=
\left( \begin{array}{c}
0\\ 0\\
\lfl\frac{-x+y}{m}\rfl\\  
\lfl\frac{-x}{m}\rfl\\
\lfl\frac{-y}{m}\rfl\\
\lfl\frac{x-y}{m}\rfl\\
\end{array} \right).
$$
Then we obtain
\begin{align*}
\mk D=\bigl\{ & \mc O_X(-D_5-D_6), \mc O_X(-D_3-D_4), \mc O_X(-D_4-D_5),\\
&\mc O_X(-D_3-D_4-D_5),\mc O_X(-D_4-D_5-D_6),\mc O_X 
\bigr\}.
\end{align*}
These are dual to the line bundles which appear in a
 full strong exceptional collections on $X$ in \cite{Ki}.
In particular, $\mk D$ forms a full strong exceptional collection.

\subsection{Fano $3$-fold in (11)}\label{sub:110}
Take the Fano $3$-fold $X$ in (11). Put
\begin{align*}
\mbi v_1&=
\left( 
 \right).
$$
Therefore we have
\begin{align*}
\mk D=\bigl\{ &\mc O_X,\mc O_X(-D_6),\mc O_X(-2D_6), \mc O_X(-D_5),
\mc O_X(-D_5-D_6),\mc O_X(-D_5-2D_6),\\
 &\mc O_X(-D_4-D_5-D_6), 
\mc O_X(-D_4-D_5),\mc O_X(-D_4-D_5+D_{6}) \bigr\}.
\end{align*}
Then by the equation (\ref{eqn:inter}) 
we can read from Figure \ref{Fano11&18}
that for all $\mc L\in\mk D$ except $\mc O_X(-D_4-D_5+D_{6})$,
$\mc L^{-1}$ is nef, hence Lemma \ref{lem:vani}
implies that  $\Ext_X^i(\mc L,F_{m*}\mc O_X)=0$ for $i>0$. 
Put 
$$
D_{\nef}:=\mk D\backslash \{\mc O_X(-D_4-D_5+D_{6}) \}.
$$
We shall prove in \S \ref{sub:11} that 
$\Span{\mk D_{\nef}}=D^b(X)$. Consequently the set $\mk D_{\nef}$ becomes 
a full strong exceptional collection. 
 
\subsection{Fano $3$-fold in (18)}\label{sub:180} 
Take the Fano $3$-fold in (18). Put
\begin{align*}
\mbi v_1&=
\left( \begin{array}{cc}
1\\ 0\\0
\end{array} \right),
\mbi v_2=
\left( \begin{array}{cc}
0\\ 1\\0
\end{array}\right),
\mbi v_3=
\left( \begin{array}{cc}
0\\ 0\\ 1
\end{array}\right),
\mbi v_4=
\left( \begin{array}{cc}
 0\\-1\\ 1
\end{array}\right),\\
\mbi v_5&=
\left( \begin{array}{cc}
0\\ -1\\0
\end{array}\right),
\mbi v_6=
\left( \begin{array}{c}
 0\\ 0\\-1
\end{array}\right),
\mbi v_7=
\left( \begin{array}{cc}
0\\ 1\\-1
\end{array}\right)
\mbi v_8=
\left( \begin{array}{cc}
-1\\ 1\\ 0
\end{array}\right)
\in \Z ^2,
\end{align*}
as in Figure \ref{Fano11&18}.
For 
$
\mbi u=
\left( \begin{array}{cc}
x\\ y \\ z \\
\end{array}
\right)\in P_m^3,
$
we have
$$
\mbi q^m(\mbi u)=
\left( 
\begin{array}{c}
\lfl\frac{x}{m}\rfl \\
\lfl\frac{y}{m}\rfl \\
\lfl\frac{z}{m}\rfl \\ 
\lfl\frac{-y+z}{m}\rfl\\
\lfl\frac{-y}{m}\rfl\\ 
\lfl\frac{-z}{m}\rfl\\ 
\lfl\frac{y-z}{m}\rfl\\
\lfl\frac{-x+y}{m}\rfl\\ 
\end{array} \right)
=
\left( 
\begin{array}{c}
0\\ 0\\ 0\\ 
\lfl\frac{-y+z}{m}\rfl\\
\lfl\frac{-y}{m}\rfl\\ 
\lfl\frac{-z}{m}\rfl\\ 
\lfl\frac{y-z}{m}\rfl\\
\lfl\frac{-x+y}{m}\rfl\\ 
\end{array} \right).
$$
Therefore we have
\begin{align*}
\mk D=\bigl \{ &\mc O_X(-iD_8), \mc O_X(-D_6-D_7-iD_8),
\mc O_X(-D_4-D_5-iD_{8}),\\ 
&\mc O_X(-D_5-D_6-D_{7}-iD_8), \mc O_X(-D_5-D_{6}-iD_8), \\
&\mc O_X(-D_4-D_5-D_{6}-iD_8) \bigm| i=0,1 
 \bigr \}.
 \end{align*}
By the equation (\ref{eqn:inter}),
 we can read from Figure \ref{Fano11&18}
that that $\mc L^{-1}$ is nef for all $\mc L\in \mk D$. Hence by 
Lemma \ref{lem:vani} implies that 
$\Ext^i _X(F_{m*}\mc O_X,F_{m*}\mc O_X)=0$ for all $m\gg 0,i>0$.  


\subsection{Fano $3$-fold in (8)}
Take the Fano $3$-fold $X$ in (8). Put
\begin{align*}
\mbi v_1&=
\left( \begin{array}{cc}
1\\ 0\\0
\end{array} \right),
\mbi v_2=
\left( \begin{array}{cc}
0\\ 1\\0
\end{array}\right),
\mbi v_3=
\left( \begin{array}{cc}
0\\ 0\\ 1
\end{array}\right),\\
\mbi v_4&=
\left( \begin{array}{cc}
-1\\ 0\\-1
\end{array}\right),
\mbi v_5=
\left( \begin{array}{cc}
1\\ -1\\0
\end{array}\right),
\mbi v_6=
\left( \begin{array}{c}
-1\\ 0\\0
\end{array}\right)
\in \Z ^2.
\end{align*}

For 
$
\mbi u=
\left( \begin{array}{cc}
x\\ y \\ z \\
\end{array}
\right)\in P_m^3,
$
we have
$$
\mbi q^m(\mbi u)=
\left( 
\begin{array}{c}
\lfl\frac{x}{m}\rfl \\
\lfl\frac{y}{m}\rfl \\
\lfl\frac{z}{m}\rfl \\ 
\lfl\frac{-x-z}{m}\rfl\\
\lfl\frac{x-y}{m}\rfl\\ 
\lfl\frac{-x}{m}\rfl\\ 
\end{array} \right)
=
\left( 
\begin{array}{c}
0\\ 0\\ 0\\ 
\lfl\frac{-x-z}{m}\rfl\\
\lfl\frac{x-y}{m}\rfl\\ 
\lfl\frac{-x}{m}\rfl\\  
\end{array} \right).
$$
Therefore we have
\begin{align*}
\mk D=\bigl\{ &\mc O_X,\mc O_X(-D_5),\mc O_X(-D_4), \mc O_X(-D_4-D_5),
 \mc O_X(-D_4-D_6),\\
&\mc O_X(-D_4-D_5-D_{6}), \mc O_X(-2D_4-D_6),\mc O_X(-2D_4-D_5-D_{6}) \bigr\}.
 \end{align*}
For all of line bundles $\mc L\in \mk D$, we can see that
$\mc L^{-1}$ is nef by a similar method to one above, hence
Lemma \ref{lem:vani}
implies that  $\Ext_X^i(F_{m*}\mc O_X,F_{m*}\mc O_X)=0$ for $i>0$. 
This result contradicts the result in \cite[page 32]{Sa09}.


\section{Exceptional collections on maximal toric Fano $3$-folds}
\label{sec:Bondal}
In this section, 
we prove Bondal's conjecture 
for maximal smooth toric Fano $3$-folds. Combining this with the results in 
\S \ref{section:Examples},
we see that $\mk D_X$ (respectively, $\mk D_{\nef}$) is a 
full strong exceptional collection in the cases Fano $3$-folds in
(17) and (18) (respectively, (11)).
 
\begin{lem}\label{lem:f_*}
Let $f\colon X\to Y$ be a proper morphism between smooth varieties.
Suppose that  an object $\mc E$ is a generator of $D^b(X)$ and $\mc O_Y$ is a direct summand of the 
object $\mb Rf_*\mc O_X$.
Then $\mb R f_*\mc E$ is also a generator of $D^b(Y)$.
\end{lem}

\begin{proof}
Put $\RHom _Y(\mb R f_*\mc E, \mc F)=0$ for some $\mc F\in D^b(Y)$. 
Then by the adjointness $\mb Rf_* \dashv f^!$,
we obtain $\omega _X\otimes \mb Lf^*(\mc F \otimes \omega_Y^{-1})=f^!\mc F=0$,
which implies that $\mb Lf^*\mc F=0$. Since $\mc F$ is a direct summand of 
$\mb R f_*\mb Lf^*\mc F=\mc F\Lotimes \mb Rf_*\mc O_X$, we obtain the assertion. 
\end{proof}

In Lemma \ref{lem:f_*}, main examples in mind are the following:
Let $X$ and $Y$ be smooth projective toric varieties.
 
(i) For the toric blow up $f\colon X\to Y$,
by $\mb R f_*\mc O_X= \mc O_Y$ and the commutativity $F_{m}^Y\circ f=f\circ F_m^X$,
we have $\mb R f_*F_{m*}^X\mc O_X=F_{m*}^Y\mc O_Y$. Hence if $F_{m*}O_X$ is a generator, then so is 
$F_{m*}\mc O_Y$.

(ii) For the Frobenius morphism $F_m$ on $X$, $F_{m*}\mc E$ is a generator of $D^b(X)$ 
for a generator $\mc E$ of $D^b(X)$. Note that $\mc O_X$ is indeed a direct summand of 
$F_{m*}\mc O_X$.


\subsection{Exceptional collection on the Fano $3$-fold in (17)}\label{sub:17}

Let us consider the Fano $3$-fold $X$ in (17) which is the product of the maximal toric del Pezzo surface $Y_3$ 
in \S \ref{sub:del} and a projective line $\PP ^1$. The following must be well-known.


\begin{lem}\label{lem:product}
Let $Y$ and $Z$ be smooth projective varieties.
Suppose that $\mc E$ and $\mc F$ 
are tilting generators 
of $D^b(Y)$ and $D^b(Z)$ respectively.
Then $\mc E\Lboxtimes \mc F$ is also a tilting generator of 
$D^b(Y\times Z)$.  
\end{lem}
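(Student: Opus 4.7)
The plan is to verify separately that $\mc E\Lboxtimes\mc F$ is both a tilting object and a generator of $D^b(Y\times Z)$. Write $p\colon Y\times Z\to Y$ and $q\colon Y\times Z\to Z$ for the two projections, so that $\mc E\Lboxtimes\mc F\cong p^*\mc E\Lotimes q^*\mc F$.

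For the tilting property, I would appeal to the K\"unneth formula for $\RHom$ on a product of smooth projective varieties, which together with the projection formula yields a canonical isomorphism
\[
\RHom_{Y\times Z}(\mc E\Lboxtimes\mc F,\,\mc E\Lboxtimes\mc F)\cong \RHom_Y(\mc E,\mc E)\otimes_\C \RHom_Z(\mc F,\mc F).
\]
By hypothesis, both $\RHom$ factors on the right are concentrated in cohomological degree $0$, hence so is their tensor product. This gives the required vanishing $\Hom^i_{Y\times Z}(\mc E\Lboxtimes\mc F,\mc E\Lboxtimes\mc F)=0$ for $i\ne 0$.

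For the generator property, assume $\mc G\in D^b(Y\times Z)$ satisfies $\RHom_{Y\times Z}(\mc E\Lboxtimes\mc F,\mc G)=0$; the goal is to deduce $\mc G=0$. Using the projection formula and the adjunction $p^*\dashv \mb Rp_*$, this vanishing rewrites as
\[
\RHom_Y(\mc E,\, \mb Rp_*(\mc G\Lotimes q^*\mc F^\vee))=0.
\]
Since $\mc E$ generates $D^b(Y)$, this forces $\mb Rp_*(\mc G\Lotimes q^*\mc F^\vee)=0$. For a closed point $y\in Y$, let $i_y\colon Z=\{y\}\times Z\hookrightarrow Y\times Z$ denote the fiber inclusion; flat base change along the cartesian square with vertical map $p$ then identifies the derived fiber at $y$ with $\RGamma(Z,\, \mb Li_y^*\mc G\Lotimes\mc F^\vee)\cong\RHom_Z(\mc F,\mb Li_y^*\mc G)$, using $q\circ i_y=\id_Z$. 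Since $\mc F$ generates $D^b(Z)$, we conclude $\mb Li_y^*\mc G=0$ for every closed point $y\in Y$.

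The final step deduces $\mc G=0$ from this fiberwise vanishing: if some cohomology sheaf $\mc H^i(\mc G)$ is nonzero, take $i$ maximal with this property and pick a closed point $(y_0,z_0)$ in its support; the usual spectral sequence for the derived pullback forces $\mc H^i(\mb Li_{y_0}^*\mc G)\cong i_{y_0}^*\mc H^i(\mc G)$, which is nonzero at $z_0$, contradicting $\mb Li_{y_0}^*\mc G=0$. The step requiring the most care is the base-change identification, which relies on flatness of $p$; the remaining ingredients (K\"unneth, projection formula, and fiberwise detection of zero objects) are standard.
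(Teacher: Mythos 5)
Your proof is correct, and the two halves have different relationships to the paper's argument. The tilting half is essentially identical: the paper also reduces to a K\"unneth-type isomorphism, computing $\RGamma(Y\times Z,\mc E\Lboxtimes \mc F)\cong \RGamma(Y,\mc E)\Lotimes\RGamma(Z,\mc F)$ and deducing $\Hom^i_{Y\times Z}(\mc E\Lboxtimes\mc F,\mc E\Lboxtimes\mc F)\cong\bigoplus_{j+k=i}\Hom^j_Y(\mc E,\mc E)\otimes\Hom^k_Z(\mc F,\mc F)$. For the generator half, however, the paper simply cites Bondal--Van den Bergh \cite[Lemma 3.4.1]{BV03}, whereas you give a self-contained argument: rewrite $\RHom_{Y\times Z}(\mc E\Lboxtimes\mc F,\mc G)=0$ via perfectness of $\mc F$ and the adjunction $\mb Lp^*\dashv\mb Rp_*$ as $\RHom_Y(\mc E,\mb Rp_*(\mc G\Lotimes q^*\mc F^\vee))=0$, kill the pushforward using that $\mc E$ generates, pass to fibers by base change, kill each $\mb Li_y^*\mc G$ using that $\mc F$ generates, and finish with the top-cohomology/Nakayama argument. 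This is a genuine alternative with a clear trade-off: the citation is shorter and the BV03 lemma holds in much greater generality (it is stated for generators of quasi-coherent derived categories, with no smoothness or properness needed), while your route is elementary and transparent but quietly uses smoothness and projectivity at several points --- perfectness of $\mc F$ to form $\mc F^\vee$, coherence and boundedness of $\mb Rp_*$ so the generator hypothesis on $Y$ applies, finite Tor-dimension of the fiber inclusion so that $\mb Li_y^*\mc G\in D^b(Z)$, and the base-change isomorphism $\mb Lj_y^*\mb Rp_*\cong\RGamma(Z,\mb Li_y^*(-))$, which (as you correctly flag) is justified not by flatness of the point inclusion $j_y$ but by flatness of $p$, i.e.\ Tor-independence of the square. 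All of these are available in the situation of the lemma, so your proof stands as written.
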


\begin{proof}
We can check 
$$
\RGamma (Y\times Z,\mc E\Lboxtimes \mc F)
=\RGamma (Y,\mc E)\Lotimes \RGamma (Z, \mc F).
$$
Hence we have 
$$
\Hom ^i_{Y\times Z}(\mc E\Lboxtimes \mc F ,\mc E\Lboxtimes \mc F )\cong 
\bigoplus _{j+k=i}\Hom ^j_Y(\mc E, \mc E)
\otimes \Hom ^k_Z(\mc F, \mc F ),
$$  
which implies that $\mc E\Lboxtimes \mc F$ is tilting. 
The fact $\mc E\Lboxtimes \mc F$ is a generator directly follows from 
\cite[Lemma 3.4.1]{BV03}.
\end{proof}

For the toric case, we know that
$F_{m*}^{Y\times Z}\mc O_{Y\times Z} \cong F_{m*}^Y\mc O_Y \boxtimes F_{m*}^Z\mc O_Z$.
Moreover we see in \S \ref{sub:del} that $F_{m*}\mc O_{Y_3}$ is a tilting generator 
on the maximal toric del Pezzo surface $Y_3$ and $m\gg 0$.
It follows that $\mk D_X$ is a full strong exceptional collection. Here we leave to readers the proof of the fact 
that $\mk D_{\PP ^1}$ is a full strong exceptional collection on $\PP^1$.


\subsection{Exceptional collection on Fano $3$-fold in (11)} \label{sub:11}
Take the Fano $3$-fold $X$ in (11). We use the same notation as in 
\S \ref{sub:110}.
Let us recall that 
\begin{align*}
\mk D_{\nef}=\bigl\{ &\mc O_X,\mc O_X(-D_6),\mc O_X(-2D_6), \mc O_X(-D_5),
\mc O_X(-D_5-D_6),\\ &\mc O_X(-D_5-2D_6),\mc O_X(-D_4-D_5-D_6), 
\mc O_X(-D_4-D_5)\bigr\}.
\end{align*}
The next is the aim of \S \ref{sub:11}.


\begin{prop}\label{prop:(11)}
Let $X$ be the toric Fano $3$-fold in (11).
Then the set $\mk D_{\nef}$ forms a full strong exceptional collection.
\end{prop}

First we determine the set $\mk D(\omega _X^{-3})_m$
for sufficiently large $m$.
For 
$
\mbi u=
\left( \begin{array}{cc}
x\\ y \\ z \\
\end{array}
\right)\in P_m^3
$
and 
$
\mbi w=
\left( \begin{array}{ccc}
-1\\ \vdots \\ -1 \\
\end{array}
\right)\in \Z^6,
$
we have
$$
\mbi q^m(\mbi u,-3 \mbi w)=
\left( 
\begin{array}{c}
\lfl\frac{(x-3)+3}{m}\rfl \\
\lfl\frac{(y-3)+3}{m}\rfl \\
\lfl\frac{(z-3)+3}{m}\rfl \\ 
\lfl\frac{(x-3)-(z-3)+3}{m}\rfl\\
\lfl\frac{-(z-3)+3}{m}\rfl\\ 
\lfl\frac{-(x-3)-(y-3)+2(z-3)+3}{m}\rfl\\ 
\end{array} \right)
=
\left( 
\begin{array}{c}
0\\
0\\
0\\
\lfl\frac{x-z+3}{m}\rfl \\
\lfl\frac{-z+6}{m}\rfl \\
\lfl\frac{-x-y+2z+3}{m}\rfl \\ 
\end{array} \right)
.
$$
Thus we have
\begin{align*}
q^m_4(\mbi u,-3 \mbi w)&=
\begin{cases}
1& \mbox{ if } x+3\geq z+m \\
0& \mbox{ if } z+m> x+3\geq z\\
-1& \mbox{ if } z>x+3
\end{cases}\\
q^m_5(\mbi u,-3 \mbi w)&=
\begin{cases}
0& \mbox{ if } 6\geq z\\
-1& \mbox{ if } z>6 
\end{cases}\\
q^m_6(\mbi u,-3 \mbi w)&=
\begin{cases}
2& \mbox{ if } 2z+3\geq x+y+2m \\
1& \mbox{ if } x+y+2m> 2z+3\geq x+y+m \\
0& \mbox{ if } x+y+m>2z+3\geq x+y \\
-1& \mbox{ if } x+y>2z+3\geq x+y-m \\
-2& \mbox{ if } x+y-m>2z+3.
\end{cases}
\end{align*}
By tedious computation, we can see
$
\mk D (\omega _X^{-3})_m =\mk D \cup \mk D',
$
where 
\begin{align*}
\mk D'=
\bigl \{ &\mc O_X(-D_4-D_5+2D_6),\mc O_X(-D_5+D_6),\\
&\mc O_X(-D_4-iD_6),
\mc O_X(D_4-jD_6 ) \bigm| i=0,1 \mbox{ and } j=1,2 \bigr \}.
\end{align*}
Note that there are linear equivalences;
\begin{equation}\label{eqn:linear11}
D_1+D_4\sim D_6, \quad D_2\sim D_6,  \quad D_3+2D_6\sim D_4+ D_5.
\end{equation}

\begin{cla}\label{Cl:nef}
$\Span{\mk D_{\nef}}=\Span{\mk D(\omega_X^{-3})_m}$.
\end{cla}

\begin{proof}
We shall check that $\mc L\in \Span{\mk D_{\nef}}$ for 
$\mc L=\mc O_X(-D_4-D_5+D_6)$ and all 
$\mc L\in \mk D'$ below. 
Note that this implies that $\Span{\mk D_{\nef}}=\Span{\mk D(\omega_X^{-3})_m}$, since
$$
\mk D(\omega_X^{-3})_m=\mk D\cup \mk D'
=\mk D_{\nef}\cup\{\mc O_X(-D_4-D_5+D_6) \}\cup \mk D'.
$$

Since $D_1\cap D_2\cap D_6=\emptyset$,
we have an exact sequence
\begin{align*}
0\to & \mc O_X(-D_1-D_2-D_6)\\
\to & \mc O_X(-D_1-D_2)\oplus \mc O_X(-D_2-D_6)\oplus \mc O_X(-D_1-D_6)\\
\to &
\mc O_X(-D_1)\oplus \mc O_X(-D_2)\oplus \mc O_X(-D_6)
\to
\mc O_X \to 0.
\end{align*}
Combining this with (\ref{eqn:linear11}), we have an exact sequence
\begin{align}\label{al:126}
0\to& \mc O_X(D_4-3D_6)\to 
\mc O_X(D_4-2D_6)^{\oplus 2}\oplus \mc O_X(-2D_6)\nonumber\\
\to &
\mc O_X(D_4-D_6 )\oplus \mc O_X(-D_6)^{\oplus 2}
\to
\mc O_X \to 0.
\end{align}
Similarly, since $D_2\cap D_4\cap D_6=\emptyset$,
we have an exact sequence
\begin{align}\label{al:246}
0\to& \mc O_X(-D_4-2D_6)\to 
\mc O_X(-D_4-D_6)^{\oplus 2}\oplus \mc O_X(-2D_6)\nonumber\\
\to &
\mc O_X(-D_4)\oplus \mc O_X(-D_6)^{\oplus 2}
\to
\mc O_X \to 0.
\end{align}

Since $D_3\cap D_4=\emptyset$,
we have an exact sequence
\begin{align*}
0\to  \mc O_X(-D_3-D_4)
\to  \mc O_X(-D_3)\oplus \mc O_X(-D_4)
\to 
\mc O_X \to 0.
\end{align*}
Using (\ref{eqn:linear11}), we have an exact sequence
\begin{align}\label{al:34}
0\to& \mc O_X(-2D_4-D_5+2D_6)\nonumber\\
\to &
\mc O_X(-D_4-D_5+2D_6)\oplus \mc O_X(-D_4)
\to \mc O_X \to 0.
\end{align}
Similarly, since $D_1\cap D_5=\emptyset$,
 we have an exact sequence
\begin{align}\label{al:15}
0\to& \mc O_X(D_4-D_5-D_6)\nonumber\\
\to &
\mc O_X(D_4-D_6)\oplus \mc O_X(-D_5)
\to
\mc O_X \to 0.
\end{align}

(i) Tensoring $\mc O_X(-D_4-D_5+D_6)$ with (\ref{al:126}),
we obtain  an exact sequence
\begin{align*}
0\to& \mc O_X(-D_5-2D_6)\to 
\mc O_X(-D_5-D_6)^{\oplus 2}\oplus \mc O_X(-D_4-D_5-D_6)\nonumber\\
\to &
\mc O_X(-D_5)\oplus \mc O_X(-D_4-D_5)^{\oplus 2}
\to
\mc O_X(-D_4-D_5+D_6) \to 0.
\end{align*}
We have already known that 
all line bundles in the sequence except $\mc O_X(-D_4-D_5+D_6)$ 
belong to $\Span{\mk D_{\nef}}$. 
Thus so does $\mc O_X(-D_4-D_5+D_6)$.\footnote{This proves the fact 
$
\Span{\mk D_{\nef}}=\Span{\mk D},
$
which has been already observed in \cite[Proposition 3.2]{BT10}.}

(ii) Tensoring $\mc O_X(-D_5+D_6)$ with (\ref{al:246}),
we obtain an exact sequence
\begin{align*}
0\to& \mc O_X(-D_4-D_5-D_6)\to 
\mc O_X(-D_4-D_5)^{\oplus 2}\oplus \mc O_X(-D_5-D_6)\nonumber\\
\to &
\mc O_X(-D_4-D_5+D_6)\oplus \mc O_X(-D_5)^{\oplus 2}
\to
\mc O_X(-D_5+D_6) \to 0.
\end{align*}
We have already known  from (i) 
that all line bundles in the sequence except $\mc O_X(-D_5+D_6)$ 
belong to $\Span{\mk D_{\nef}}$. Thus so does $\mc O_X(-D_5+D_6)$.

(iii) Tensoring $\mc O_X(-D_4-D_5+2D_6)$ with (\ref{al:126}),
we obtain  an exact sequence
\begin{align*}
0\to& \mc O_X(-D_5-D_6)\to 
\mc O_X(-D_5)^{\oplus 2}\oplus \mc O_X(-D_4-D_5)\nonumber\\
\to &
\mc O_X(-D_4-D_5+D_6)^{\oplus 2}\oplus \mc O_X(-D_5+D_6)
\to
\mc O_X(-D_4-D_5+2D_6) \to 0.
\end{align*}
We have already known from (i) and (ii) 
that all line bundles in the sequence except $\mc O_X(-D_4-D_5+2D_6)$ 
belong to $\Span{\mk D_{\nef}}$. Thus so does $\mc O_X(-D_4-D_5+2D_6)$.

(iv)
Take $j=1,2$. Tensoring $\mc O_X(D_4-jD_6)$ with (\ref{al:34}),
we obtain an exact sequence%
\begin{align*}
0\to& \mc O_X(-D_4-D_5+(2-j)D_6)\nonumber\\
\to &
\mc O_X(-D_5+(2-j)D_6)\oplus \mc O_X(-jD_6)
\to \mc O_X(D_4-jD_6) \to 0.
\end{align*}
We have already known from (i) and (ii)
that all line bundles in the sequence except $\mc O_X(D_4-jD_6)$ 
belong to $\Span{\mk D_{\nef}}$. Thus so does $\mc O_X(D_4-jD_6)$.

(v)
Take $i=0,1$. Tensoring $\mc O_X(-D_4-iD_6)$ with (\ref{al:15}),
we obtain an exact sequence
\begin{align*}
0\to& \mc O_X(-D_5-(i+1)D_6)\nonumber\\
\to &
\mc O_X(-(i+1)D_6)\oplus \mc O_X(-D_4-D_5-iD_6)
\to
\mc O_X(-D_4-iD_6) \to 0.
\end{align*}
We have already known
that all line bundles in the sequence except $\mc O_X(-D_4-iD_6)$ 
belong to $\Span{\mk D_{\nef}}$. Thus so does $\mc O_X(-D_4-iD_6)$.

Therefore we know that
$\Span{\mk D_{\nef}}=\Span{\mk D (\omega _X^{-3})_m}$.
\end{proof}

Now we can prove Proposition \ref{prop:(11)}.

\begin{proof}
We directly see by computation that
$$
\mk D_{\nef} \subset \mk D (\omega _X^{-1})_m\subset \mk D (\omega _X^{-2})_m
\subset \mk D (\omega _X^{-3})_m,
$$
 which is more or less expected by Lemma \ref{lem:mgg}. 
It is also known that $\bigoplus _{i=0}^3\omega _X^{-i}$ is a generator 
(\cite[Lemma 3.2.2]{VdB04}), since $\omega_X^{-1}$ is very ample. 
Lemma \ref{lem:f_*} implies that $\Span{\mk D (\omega _X^{-3})_m}^\perp=0$.
Thus we can see from Claim \ref{Cl:nef} that 
$$
\Span{\mk D_{\nef}}^\perp=0.
$$ 
Combining the result in 
\S \ref{sub:110} with
Lemma \ref{lem:tilting&exceptional}(i), we complete the proof.
\end{proof}

\subsection{Exceptional collection on Fano $3$-fold in (18)}\label{sub:18} 
Take the Fano $3$-fold in (18), and use the same notation as in 
\S \ref{sub:180}. 
Recall that
\begin{align*}
\mk D=\bigl \{ &\mc O_X(-iD_8), \mc O_X(-D_6-D_7-iD_8),
\mc O_X(-D_4-D_5-iD_{8}),\\ 
&\mc O_X(-D_5-D_6-D_{7}-iD_8), \mc O_X(-D_5-D_{6}-iD_8), \\
&\mc O_X(-D_4-D_5-D_{6}-iD_8) \bigm| i=0,1 
 \bigr \}.
\end{align*}
We can prove the following.


\begin{prop}\label{prop:(18)}
Let $X$ be the toric Fano $3$-fold in (18).
Then the set $\mk D$ forms a full strong exceptional collection.
\end{prop}

First we want to find all elements of $\mk D(\omega _X^{-3})_m$
for sufficiently large $m$.
For 
$
\mbi u=
\left( \begin{array}{cc}
x\\ y \\ z \\
\end{array}
\right)\in P_m^3
$
and 
$
\mbi w=
\left( \begin{array}{ccc}
-1\\ \vdots \\ -1 \\
\end{array}
\right)\in \Z^8,
$
we have
$$
\mbi q^m(\mbi u,-3 \mbi w)=
\left( 
\begin{array}{c}
\lfl\frac{(x-3)+3}{m}\rfl \\
\lfl\frac{(y-3)+3}{m}\rfl \\
\lfl\frac{(z-3)+3}{m}\rfl \\ 
\lfl\frac{-(y-3)+(z-3)+3}{m}\rfl\\
\lfl\frac{-(y-3)+3}{m}\rfl\\ 
\lfl\frac{-(z-3)+3}{m}\rfl\\ 
\lfl\frac{(y-3)-(z-3)+3}{m}\rfl\\
\lfl\frac{-(x-3)+(y-3)+3}{m}\rfl\\ 
\end{array} \right)
=
\left( 
\begin{array}{c}
0 \\
0 \\
0 \\ 
\lfl\frac{-y+z+3}{m}\rfl\\
\lfl\frac{-y+6}{m}\rfl\\ 
\lfl\frac{-z+6}{m}\rfl\\ 
\lfl\frac{y-z+3}{m}\rfl\\
\lfl\frac{-x+y+3}{m}\rfl\\ 
\end{array} \right).
$$
Thus we have
\begin{align*}
q^m_4(\mbi u,-3 \mbi w)&=
\begin{cases}
1& \mbox{ if } z\geq y+m-3 \\
0& \mbox{ if } y+m-3>z\geq y-3 \\
-1& \mbox{ if } y>z+3
\end{cases}\\
q^m_5(\mbi u,-3 \mbi w)&=
\begin{cases}
0& \mbox{ if } 6\geq y \\
-1& \mbox{ if } y>6 
\end{cases}\\
q^m_6(\mbi u,-3 \mbi w)&=
\begin{cases}
0& \mbox{ if } 6\geq z \\
-1& \mbox{ if } z>6
\end{cases}\\
q^m_7(\mbi u,-3 \mbi w)&=
\begin{cases}
1& \mbox{ if } y\geq z+m-3 \\
0& \mbox{ if } z+m-3>y\geq z-3 \\
-1& \mbox{ if } z>y+3
\end{cases}\\
q^m_8(\mbi u,-3 \mbi w)&=
\begin{cases}
1& \mbox{ if } y\geq x+m-3 \\
0& \mbox{ if } x+m-3>y\geq x-3 \\
-1& \mbox{ if } x>y+3 .
\end{cases}
\end{align*}
Hence by tedious computation, we can see
$
\mk D (\omega _X^{-3})_m =\mk D \cup \mk D',
$
where 
\begin{align*}
\mk D'=
\bigl \{ &\mc O_X(-D_4-iD_8),\mc O_X(-D_5-iD_8),\mc O_X(-D_6-iD_8),\mc O_X(-D_7-iD_8),\\
&\mc O_X(-D_4-D_5+D_7+iD_8), \mc O_X(D_4-D_6-D_7-iD_8),\\
&\mc O_X(-D_4-D_5+D_8),\mc O_X(-D_5-D_6+D_8),\\
&\mc O_X(-D_4-D_5-D_6+D_8) \bigm| i=0,1 \bigr \}.
\end{align*}

Since $\mk D$ contains several line bundles of the form $\mc L$ and $\mc L\otimes \mc O_X(D_8)$,
Claim \ref{cla:D8} gives that $\mc L\otimes \mc O_X(iD_8)\in \Span{\mk D}$ for all $i\in \Z$. 

\begin{cla}\label{cla:D8}
If $\mc L$ and $\mc L\otimes \mc O_X(D_8) \in \Span{\mk D}$,
then we have 
$\mc L\otimes \mc O_X(iD_8)\in \Span{\mk D}$ for all $i\in \Z$. 
\end{cla}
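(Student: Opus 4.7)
}
The plan is to exploit the short exact sequence derived immediately above the claim, namely
\[
0\to \mc O_X(-2D_8)\to \mc O_X(-D_8)^{\oplus 2}\to \mc O_X\to 0,
\]
and turn it into a two--term induction mechanism on the exponent of $D_8$. For any integer $i$, tensoring with the line bundle $\mc L\otimes \mc O_X((i+1)D_8)$ is an exact functor, so I obtain a short exact sequence
\[
0\to \mc L\otimes \mc O_X((i-1)D_8)\to \bigl(\mc L\otimes \mc O_X(iD_8)\bigr)^{\oplus 2}\to \mc L\otimes \mc O_X((i+1)D_8)\to 0,
\]
and hence a distinguished triangle in $D^b(X)$ with the same three terms.

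Since $\Span{F_{m*}\mc O_X}$ is, by definition, a triangulated subcategory of $D^b(X)$ (closed under cones, shifts, and finite direct sums), the two-out-of-three property for distinguished triangles tells me that whenever any two of these three objects lie in $\Span{F_{m*}\mc O_X}$, so does the third. Writing $\mc L_i:=\mc L\otimes \mc O_X(iD_8)$, the hypothesis supplies $\mc L_0,\mc L_1\in \Span{F_{m*}\mc O_X}$.

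Applying the triangle with $i=1$ then yields $\mc L_2\in \Span{F_{m*}\mc O_X}$; with $i=2$ it yields $\mc L_3$, and so on by induction upward. Symmetrically, applying the triangle with $i=0$ and the two known members $\mc L_0,\mc L_1$ produces $\mc L_{-1}$; then $i=-1$ produces $\mc L_{-2}$, and so on downward. A double induction on $|i|$ therefore covers all $i\in \Z$, which is the desired conclusion.

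There is essentially no obstacle: the only ingredient beyond the triangulated-subcategory axioms is the linear equivalence $D_1\sim D_8$ of (\ref{eqn:linear}), which has already been recorded, together with the standard fact that $D_1\cap D_8=\emptyset$ (visible in Figure \ref{Fano11&18}) giving rise to the Koszul-type sequence above. The argument is self-contained and purely formal once the sequence is in hand.
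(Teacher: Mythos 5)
Your proof is correct and is essentially the paper's own argument: the paper obtains Claim \ref{cla:D8} precisely by tensoring the sequence $0\to \mc O_X(-2D_8)\to \mc O_X(-D_8)^{\oplus 2}\to \mc O_X\to 0$ (coming from $D_1\cap D_8=\emptyset$ and $D_1\sim D_8$) with line bundles and using the two-out-of-three property of the triangulated subcategory $\Span{F_{m*}\mc O_X}$, iterated in both directions exactly as in your double induction.
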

\begin{proof}
Note that there are linearly equivalences;
\begin{equation}\label{eqn:linear}
D_1\sim D_8, \quad D_2+D_4+D_5\sim D_7+D_8,  \quad D_3+D_4\sim D_6+ D_7.
\end{equation}
We have
$$
0\to \mc O_X(-D_1-D_8)\to \mc O_X(-D_1)\oplus \mc O_X(-D_8)\to \mc O_X\to 0.
$$
Combining this with (\ref{eqn:linear}), we have
$$
0\to \mc O_X(-2D_8)\to \mc O_X(-D_8)\oplus \mc O_X(-D_8)\to \mc O_X\to 0.
$$
By tensoring $\mc L\in \Pic X$, we obtain the claim.
\end{proof}


\begin{cla}\label{cla:18}
$\Span{\mk D}=\Span{\mk D (\omega _X^{-3})_m}$.
\end{cla}

\begin{proof}
We shall check that $\mc L\in \Span{\mk D}$ for all $\mc L\in \mk D'$ below. 
First note that Claim \ref{cla:D8} implies  
the last three line bundles in $\mk D'$ belong to $\Span{\mk D}$. 
We take an arbitrary integer $i\in \Z$ below.

(i)
We have exact sequences:
\begin{align*}
0\to \mc O_X(-D_5-D_6-D_7+iD_8)\to& \mc O_X(-D_6-D_7+iD_8)\\
 &\to \mc O_{D_5}(-D_6+iD_8)\to 0,
\end{align*}
\[
0\to \mc O_X(-D_5-D_6+iD_8)\to \mc O_X(-D_6+iD_8)\to \mc O_{D_5}(-D_6+iD_8)\to 0.
\]
Hence 
$\mc O_X(-D_6+iD_8) \in \Span {\mk D}$, since 
$$
\mc O_X(-D_5-D_6-D_7+iD_8), \mc O_X(-D_5-D_6+iD_8), \mc O_X(-D_6-D_7+iD_8)\in \Span{\mk D}.
$$ 
Similarly we obtain 
$\mc O_X(-D_5+iD_8)\in \Span{\mk D}$.

(ii)
We have exact sequences:
\begin{align*}
0\to \mc O_X(-D_3-D_4-D_5+iD_8)\to& \mc O_X(-D_3-D_4+iD_8)\\
\to& \mc O_{D_5}(-D_4+iD_8)\to 0,
\end{align*}
\[
0\to \mc O_X(-D_4-D_5+iD_8)\to \mc O_X(-D_4+iD_8)\to \mc O_{D_5}(-D_4+iD_8)\to 0.
\]
Since the line bundles
$$
\mc O_X(-D_3-D_4-D_5+iD_8)\cong \mc O_X(-D_5-D_6-D_7+iD_8),$$ 
$$
\mc O_X(-D_3-D_4+iD_8)\cong \mc O_X(-D_6-D_7+iD_8)
$$ 
and
$$\mc O_X(-D_4-D_5+iD_8)$$
belong to $\Span{\mk D}$
by (\ref{eqn:linear}),
we have $\mc O_X(-D_4+iD_8)\in \Span{\mk D}$.

(iii)
We have exact sequences:
\begin{align*}
0\to \mc O_X(-D_2-D_6-D_7+iD_8)\to &\mc O_X(-D_2-D_7+iD_8)\\
&\to \mc O_{D_6}(-D_7+iD_8)\to 0,
\end{align*}
\[
0\to \mc O_X(-D_6-D_7+iD_8)\to \mc O_X(-D_7+iD_8)\to \mc O_{D_6}(-D_7+iD_8)\to 0.
\]
Since we can see from  (\ref{eqn:linear}) that  
$$
\mc O_X(-D_2-D_6-D_7+iD_8)\cong\mc O_X(-D_4-D_5-D_6+(i+1)D_8)\in\Span{\mk D},
$$
$$
\mc O_X(-D_2-D_7+iD_8)\cong\mc O_X(-D_5-D_6+(i+1)D_8) 
\in\Span{\mk D},
$$
$$
\mc O_X(-D_6-D_7+iD_8)\in\Span{\mk D},
$$
we know that 
$
\mc O_X(-D_7+iD_8)\in \Span{\mk D}.
$

(iv) 
We have exact sequences:
\begin{align*}
0\to \mc O_X(-D_2-D_3-D_7+iD_8)\to& \mc O_X(-D_2-D_3+iD_8)\\
&\to \mc O_{D_7}(-D_2+iD_8)\to 0,
\end{align*}
\[
0\to \mc O_X(-D_2-D_7+iD_8)\to \mc O_X(-D_2+iD_8)\to 
\mc O_{D_7}(-D_2+iD_8)\to 0.
\]
Since we have
\begin{align*}
\mc O_X(-D_2-D_3-D_7+iD_8)\cong \mc O_X(-D_5-D_6-D_7+(i+1)D_8)\in\Span{\mk D},\\
\mc O_X(-D_2-D_3+iD_8)\cong \mc O_X(-D_5-D_6+(i+1)D_8)\in \Span{\mk D},\\
\mc O_X(-D_2-D_7+iD_8)\cong \mc O_X(-D_4-D_5+(i+1)D_8)\in \Span{\mk D}
\end{align*}
by (\ref{eqn:linear}),
we obtain 
$$
\mc O_X(-D_4-D_5+D_7+(i+1)D_8)\cong \mc O_X(-D_2+iD_8)
\in \Span{\mk D}.
$$

(v) 
We have exact sequences:
\begin{align*}
0\to \mc O_X(-D_2-D_3-D_4+iD_8)\to &\mc O_X(-D_3-D_4+iD_8)\\
&\to \mc O_{D_2}(-D_3+iD_8)\to 0,
\end{align*}
\[
0\to \mc O_X(-D_2-D_3+iD_8)\to \mc O_X(-D_3+iD_8)\to \mc O_{D_2}(-D_3+iD_8)\to 0
\]%
Since we have
\begin{align*}
\mc O_X(-D_2-D_3-D_4+iD_8)\cong \mc O_X(-D_4-D_5-D_6+(i+1)D_8)\in\Span{\mk D},\\
\mc O_X(-D_3-D_4+iD_8)\cong \mc O_X(-D_6-D_7+iD_8)\in \Span{\mk D},\\
\mc O_X(D_2-D_3+iD_8)\cong \mc O_X(-D_5-D_6+iD_8)\in \Span{\mk D}
\end{align*}
by (\ref{eqn:linear}),
we obtain 
$\mc O_X(D_4-D_6-D_7+iD_8)\cong \mc O_X(-D_3+iD_8)\in \Span{\mk D}$.

Hence we know that
$\Span{\mk D}=\Span{\mk D (\omega _X^{-3})_m}$.
\end{proof}

Then by a similar argument to one given in \S \ref{sub:11},
we obtain Proposition \ref{prop:(18)}.


\section{Birational contractions and tilting objects}\label{sec:birational}


\begin{lem}\label{lem:birational0}
Let $(f,\varphi)\colon (X,\Delta _X)\to (Y,\Delta _Y)$ be a 
$T$-equivariant extremal 
birational contraction between smooth projective toric varieties.
Choose a maximal cone $\sigma$ in $\Delta_X$ such that 
$\varphi (\sigma)$ is a cone in $\Delta_Y$.
For any $\mbi u\in P_m^n$, we denote a divisor 
$D^X_{\mbi u,\mbi 0, \sigma}$ on X (resp. $D^Y_{\mbi u,\mbi 0,\varphi(\sigma)}$ on Y)
 by $D^X_{\mbi u}$ (resp. $D^Y_{\mbi u}$).
Then we have
$f_*\mc O_X(D^X_{\mbi u})=\mc O_Y(D^Y_{\mbi u})$. In particular, 
$$
\mk D_Y=\bigl\{ f_*\mc L_X \bigm| \mc L_X \in \mk D_X \bigr\}.
$$
and
$$\mc O_X(D^X_{\mbi u})=f^*\mc O_Y(D^Y_{\mbi u})\otimes \mc O_X(aE),$$
where $a\ge 0$ and $E$ is the exceptional divisor of $f$.
\end{lem}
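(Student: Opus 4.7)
The plan is to first establish the pullback formula $\mc O_X(D^X_{\mbi u})=f^*\mc O_Y(D^Y_{\mbi u})\otimes\mc O_X(aE)$ with $a\ge 0$, and then extract the pushforward formula and the description of $\mk D_Y$. Since $f$ is an extremal birational toric contraction onto a smooth target, it is divisorial; after identifying $N_X\cong N_Y$ via $\varphi$, the fan $\Delta_X$ is the star-subdivision of $\Delta_Y$ along a single new primitive vector $\mbi v_E\in \mc V(\Delta_X)\setminus\mc V(\Delta_Y)$ corresponding to $E$. Smoothness of both $X$ and $Y$ forces the combinatorial relation $\mbi v_E=\sum_{j\in J}\mbi v_j$, where $\tau_J:=\sum_{j\in J}\R_{\ge 0}\mbi v_j$ is the unique face of $\Delta_Y$ whose relative interior contains $\mbi v_E$.

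First I would show that $D^X_{\mbi u}-f^*D^Y_{\mbi u}$ is supported on $E$. For every maximal cone $\sigma'\in\Delta_X$ with $\mbi v_E\notin\sigma'$, one has $\sigma'\in\Delta_Y$ with the same matrix $A_{\sigma'}$, and the Cartier datum of both $D^X_{\mbi u}$ and $f^*D^Y_{\mbi u}$ on $U_{\sigma'}$ is the unique $\mbi u_{\sigma'}\in M$ satisfying $\t\mbi v_i\mbi u_{\sigma'}=q^m_i(\mbi u,\mbi 0,\sigma)$ for all $\mbi v_i\in\sigma'$. Since $\bigcup_{\mbi v_E\notin\sigma'}U_{\sigma'}=X\setminus E$, the difference is a $T$-invariant Cartier divisor supported on $E$, hence equals $aE$ for a unique $a\in\Z$.

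Second I would compute $a$ by fixing a maximal cone $\sigma_E\in\Delta_X$ containing $\mbi v_E$ and letting $\tau\in\Delta_Y$ be the unique cone with $\sigma_E\subset\tau$. Denoting by $\mbi u^X_{\sigma_E}$ and $\mbi u^Y_\tau$ the corresponding Cartier data, a direct comparison of Weil-divisor expressions on $U_{\sigma_E}$ yields $a=\t\mbi v_E(\mbi u^X_{\sigma_E}-\mbi u^Y_\tau)$. Using $\t\mbi v_E\mbi u^X_{\sigma_E}=q^m_E(\mbi u,\mbi 0,\sigma)$ and the relation $\mbi v_E=\sum_{j\in J}\mbi v_j$ with $\mbi v_j\in\tau$ for every $j\in J$, which gives $\t\mbi v_E\mbi u^Y_\tau=\sum_{j\in J}q^m_j(\mbi u,\mbi 0,\sigma)$, this simplifies to
\[
a=\Bigl\lfloor\frac{\sum_{j\in J}\t\mbi v_j A_\sigma^{-1}\mbi u}{m}\Bigr\rfloor-\sum_{j\in J}\Bigl\lfloor\frac{\t\mbi v_j A_\sigma^{-1}\mbi u}{m}\Bigr\rfloor\ge 0,
\]
where the inequality is the elementary superadditivity of the floor function.

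Finally, the pushforward formula $f_*\mc O_X(D^X_{\mbi u})=\mc O_Y(D^Y_{\mbi u})$ follows from the projection formula applied to the just-established pullback, together with the identity $f_*\mc O_X(aE)=\mc O_Y$ for $a\ge 0$; this last identity follows from $f_*\mc O_X=\mc O_Y$, the inclusion $\mc O_X\hookrightarrow\mc O_X(aE)$, and Hartogs' extension applied to the center $f(E)$, which has codimension at least two in the smooth variety $Y$. The description of $\mk D_Y$ is then immediate from Lemma \ref{lem:FmO}(ii) and the equality $f_*F_{m*}^X\mc O_X=F_{m*}^Y\mc O_Y$ coming from the commutativity $F_m^Y\circ f=f\circ F_m^X$. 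The main obstacle is the initial combinatorial reduction, specifically the identity $\mbi v_E=\sum_{j\in J}\mbi v_j$, which is the single place where the joint smoothness of $X$ and $Y$ really enters and which drives all subsequent computations.
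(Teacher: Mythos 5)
Your proposal is correct, but it takes a genuinely different route from the paper's. The paper never touches the fan structure of $f$: it pushes the Thomsen decomposition forward, using $f_*F^X_{m*}\mc O_X=F^Y_{m*}f_*\mc O_X=F^Y_{m*}\mc O_Y$, invokes a Krull--Schmidt argument (each $\mc O_Y(D^Y_{\mbi u})$ is indecomposable with endomorphism ring $\C$) to conclude that every $f_*\mc O_X(D^X_{\mbi u})$ is some $\mc O_Y(D^Y_{\mbi u'})$, and then upgrades the natural inclusion $f_*\mc O_X(D^X_{\mbi u})\hookrightarrow\mc O_Y(D^Y_{\mbi u})$, an isomorphism in codimension one, to an isomorphism; the pullback statement with $a\ge 0$ is then read off (and is in fact left largely implicit). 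You instead prove the pullback formula first, by identifying $f$ as the star subdivision at $\mbi v_E=\sum_{j\in J}\mbi v_j$ (extremality plus smoothness of $Y$ excludes a small contraction, and joint smoothness forces unit coefficients), comparing the two divisors away from $E$, and computing $a=\bigl\lfloor\sum_{j}x_j\bigr\rfloor-\sum_{j}\lfloor x_j\rfloor\ge 0$ with $x_j=\t\mbi v_jA_\sigma^{-1}\mbi u/m$; the pushforward then follows from the projection formula together with $f_*\mc O_X(aE)=\mc O_Y$. Your route buys an explicit closed formula for $a$ (which the paper only gets at indirectly later, e.g.\ via intersection with a fiber in the proof of Lemma \ref{lem:fano3}) and an honest proof of the pullback assertion, at the price of relying on the structural fact that an extremal divisorial contraction between smooth toric varieties is an equivariant blowup along a smooth invariant center, which the paper only needs in the subsequent lemma. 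One small simplification available to you: to see that $D^X_{\mbi u}-f^*D^Y_{\mbi u}$ is supported on $E$ there is no need to discuss coverings of $X\setminus E$ by maximal cones avoiding $\mbi v_E$; since $A_{\varphi(\sigma)}=A_\sigma$, every prime divisor other than $E$ occurs in $D^X_{\mbi u}$ and in $f^*D^Y_{\mbi u}$ with the same coefficient $\lfloor\t\mbi v_iA_\sigma^{-1}\mbi u/m\rfloor$, so the difference is automatically a multiple of $E$.
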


\begin{proof}
From the commutativity $F_m\circ f=f \circ F_m$, 
we obtain 
$$
\bigoplus _{\mbi u\in P_m^n}\mc O_Y(D_{\mbi u}^Y)= F_{m*}\mc O_Y=F_{m*}f_*\mc O_X 
=f_* F_{m*}\mc O_X 
=\bigoplus _{\mbi u\in P_m^n}f_*\mc O_X(D_{\mbi u}^X).
$$
Fix some $\mbi u_1\in P_m^n$.
By the above equalities, the canonical inclusion and projection, let us define the maps
$$
\alpha_{\mbi u'} \colon \mc O_Y(D_{\mbi u_1}^Y)
\hookrightarrow 
\bigoplus _{\mbi u\in P_m^n}\mc O_Y(D_{\mbi u}^Y)=\bigoplus _{\mbi u\in P_m^n}f_*\mc O_X(D_{\mbi u}^X)
\twoheadrightarrow 
f_*\mc O_X(D_{\mbi u'}^X)
$$
and 
$$
\beta_{\mbi u'} \colon 
f_*\mc O_X(D_{\mbi u'}^X)
\hookrightarrow 
\bigoplus _{\mbi u\in P_m^n}f_*\mc O_X(D_{\mbi u}^X)=\bigoplus _{\mbi u\in P_m^n}\mc O_Y(D_{\mbi u}^Y)
\twoheadrightarrow 
\mc O_Y(D_{\mbi u_1}^Y)
$$
for each $\mbi u'\in P_m^n$.
Then we have
$$
\sum _{\mbi u'\in P_m^n}\beta _{\mbi u'}\circ \alpha_{\mbi u'}=\id ,
$$ 
and hence $\beta _{\mbi u_2}\circ \alpha_{\mbi u_2} \ne 0$ for some $\mbi u_2$. 
Since $\End_Y(\mc O_Y(D_{\mbi u_1}^Y))=\mb C$, we obtain 
$f_*\mc O_X(D_{\mbi u_2}^X)=\mc O_Y(D_{\mbi u_1}^Y)$.
Apply a similar argument for 
$$
\bigoplus _{\mbi u\in P_m^n\backslash \{\mbi u_2\}}f_*\mc O_X(D_{\mbi u}^X)
=\bigoplus _{\mbi u\in P_m^n\backslash \{\mbi u_1\}}\mc O_Y(D_{\mbi u}^Y).
$$
Then we can conclude that for all $\mbi u\in P_m^n$ 
we have $\mbi u'\in P_m^n$ such that $f_*\mc O_X(D_{\mbi u}^X)=\mc O_Y(D_{\mbi u'}^Y)$.

On the other hand, we have an inclusion 
$f_*\mc O_X(D_{\mbi u}^X)\hookrightarrow  \mc O_Y(D_{\mbi u}^Y)$, which is isomorphic in 
codimension one. Thus it is isomorphic. 
\end{proof}


\begin{lem}\label{lem:birational1}
In the situation of Lemma $\ref{lem:birational0}$, 
assume that $f$ is a equivariant blowing-up along a $T$-invariant smooth center $C$,
and define $d:=\dim E-\dim C$, $n:=\dim X$ and $\mc L_Y:=f_*\mc L_X$ 
for some $\mc L_X\in \mk D_X$.
Consider the Leray spectral sequence
\begin{align*}
 E_2^{p, q}=& H^p(Y,\mc L_Y^{\otimes m}\otimes \omega_Y \otimes \R^q f_*\mc O_X((ma+d)E))\\ 
 \Longrightarrow
 E^{p+q}=&H^{p+q}(X,f^*(\mc L_Y^{\otimes m}\otimes \omega_X)\otimes \mc O_X((ma+d)E))
\end{align*}
and assume furthermore that the vanishing
\begin{align}\label{ali:vanishingX0}
\Hom _X^i(\mc L_X,F_{m*}\mc O_X)=0
\end{align}
holds for all $i>0$. 
\begin{enumerate}
\item
The vanishing
$$\Hom_Y^i(\mc L_Y,F_{m*}\mc O_Y)=0$$
holds for all $i>0$ if and only if
$E_2^{p,d}=0$ for all $p<n-d-1=\dim C$.

In particular, if $d=n-1$, namely if $C$ is a point,
this is automatically true.
\item
Assume that 
\begin{align}\label{ali:C}
H^i(C,\mc L_Y^{\otimes -m}\otimes f_*\mc O_E(l-d-1))=0
\end{align}
for $i>0$ and all $l$ with $ma+d\ge l$, 
where we define $\mc O_E(1)$ to be the tautological line bundle of 
the $\PP ^d$-bundle $E\to C$.
Then $E_2^{p,d}=0$ for all $p$ with $p<n-d-1$.
\end{enumerate}
\end{lem}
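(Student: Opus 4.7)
By the projection formula, $\Hom_Y^i(\mc L_Y,F_{m*}\mc O_Y)\cong H^i(Y,\mc L_Y^{\otimes(-m)})$, so Serre duality on $Y$ recasts the desired vanishing for $i>0$ as $H^p(Y,\mc L_Y^{\otimes m}\otimes\omega_Y)=0$ for all $p<n$. Since the exceptional locus of $f$ has codimension $\ge 2$ in $Y$, Hartogs gives $f_*\mc O_X(lE)=\mc O_Y$ for $l\ge 0$, identifying this group with $E_2^{p,0}$ (reading $\omega_Y$ for the apparent misprint $\omega_X$ in the stated abutment). Similarly, $\omega_X=f^*\omega_Y\otimes\mc O_X(dE)$ and Lemma~\ref{lem:birational0} let me turn the hypothesis (\ref{ali:vanishingX0}) into $E^{p+q}=0$ for $p+q<n$ via Serre duality on $X$.

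\paragraph{Proof of (i).}
The standard blow-up fact $R^qf_*\mc O_X(lE)=0$ for $0<q<d$, checked fiberwise on the $\PP^d$-bundle $E\to C$, leaves only the rows $q=0$ and $q=d$ of $E_2^{\bullet,\bullet}$ possibly nonzero. All differentials between these two rows vanish except $d_{d+1}\colon E_2^{p-d-1,d}\to E_2^{p,0}$, so
\[
E_\infty^{p,0}=\mathrm{coker}(d_{d+1}),\qquad E_\infty^{p,d}=\ker(d_{d+1}).
\]
Feeding in $E_\infty^{p,q}=0$ whenever $p+q<n$, a direct diagram chase yields the equivalence
\[
\bigl(E_2^{p,0}=0\ \forall p<n\bigr)\iff\bigl(E_2^{p,d}=0\ \forall p<n-d-1\bigr).
\]
When $C$ is a point, $d=n-1$ and the right-hand condition is vacuous, giving the unconditional part of (i).

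\paragraph{Proof of (ii).}
To analyze $R^df_*\mc O_X((ma+d)E)$ I would iterate the short exact sequences
\[
0\to\mc O_X((k-1)E)\to\mc O_X(kE)\to\mc O_E(-k)\to 0
\]
for $k=1,\dots,ma+d$; the induced long exact sequences degenerate (because $R^qf_*\mc O_E(-k)=0$ for $q<d$) into short exact sequences $0\to R^df_*\mc O_X((k-1)E)\to R^df_*\mc O_X(kE)\to R^df_*\mc O_E(-k)\to 0$. Relative Serre duality along $E\to C$, together with $\omega_{E/C}\cong\mc O_E(-d-1)\otimes\pi^*\det N_{C/Y}$, identifies
\[
R^df_*\mc O_E(-k)\ \cong\ (f_*\mc O_E(k-d-1))^{\vee}\otimes\det N_{C/Y}.
\]
Via the projection formula, $E_2^{p,d}$ becomes a cohomology group on $C$ built from tensor products of $\mc L_Y^{\otimes m}|_C\otimes\omega_Y|_C$ with these sheaves. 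Serre duality on $C$, combined with the adjunction identity $\omega_C\cong\omega_Y|_C\otimes\det N_{C/Y}$, cancels the determinantal twist and transforms the required vanishing $E_2^{p,d}=0$ for $p<\dim C$ into
\[
H^i(C,\mc L_Y^{\otimes(-m)}\otimes f_*\mc O_E(l-d-1))=0\qquad(i>0,\ d+1\le l\le ma+d),
\]
which is exactly hypothesis (\ref{ali:C}) (the remaining values $l\le d$ being vacuous since $f_*\mc O_E(l-d-1)=0$ there).

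\paragraph{Main obstacle.}
Step~1 is essentially formal once the $E_2$-page has been collapsed onto two rows. The real work lies in Step~2, where one must thread the determinantal twist by $\det N_{C/Y}$ through both relative Serre duality along $E\to C$ and ordinary Serre duality on $C$, and verify that the two contributions cancel so that the conclusion matches (\ref{ali:C}) verbatim without leftover determinantal factors. The fact that the relevant sheaf on $C$ is only a successive extension of the $R^df_*\mc O_E(-k)$ rather than a direct sum causes no trouble, since the sought cohomological vanishing descends to each extension factor.
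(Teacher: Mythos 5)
Your argument is correct and is essentially the paper's own proof: you reduce both vanishings via Serre duality (using $\mc L_X\cong f^*\mc L_Y\otimes\mc O_X(aE)$ and $\omega_X\cong f^*\omega_Y\otimes\mc O_X(dE)$) to vanishing of the abutment in degrees $<n$ and of the $q=0$ row, collapse the spectral sequence to the two rows $q=0,d$, run the two-row comparison to get (i), and for (ii) combine relative duality along $E\to C$, adjunction on $C$, and the extension structure of $\R^d f_*\mc O_X(kE)$ coming from the sequences $0\to\mc O_X((k-1)E)\to\mc O_X(kE)\to\mc O_E(-k)\to 0$, which is exactly the computation the paper carries out (and partly leaves implicit). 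The only blemish is notational: with the convention $f_*\mc O_E(i)\cong\Sym^i\mc N_{C/Y}^\vee$ (so $\mc O_X(E)|_E\cong\mc O_E(-1)$) one has $\omega_{E/C}\cong\mc O_E(-d-1)\otimes(f|_E)^*\det\mc N_{C/Y}^\vee$ rather than $\det\mc N_{C/Y}$, but the identity you actually use, $\R^d f_*\mc O_E(-k)\cong(f_*\mc O_E(k-d-1))^\vee\otimes\det\mc N_{C/Y}$, is the correct one, and it cancels against $\omega_C\cong\omega_Y|_C\otimes\det\mc N_{C/Y}$ exactly as you claim.
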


\begin{proof}
(i) First of all, we have
\begin{align*}
&\Hom _X^i(\mc L_X,F_{m*}\mc O_X)
=\Hom _X^i(F_{m}^*\mc L_X,\mc O_X)\\
=&H^{n-i}(X,\mc L_X^{\otimes m}\otimes \omega_X )^\vee
=H^{n-i}(X,\mc L_X^{\otimes m}\otimes f^*\omega_Y\otimes \mc O_X(dE) )^\vee\\
=&H^{n-i}(X,f^*(\mc L_Y^{\otimes m}\otimes \omega_X)\otimes \mc O_X((ma+d)E))^\vee\\
=&(E^{n-i})^\vee.
\end{align*}
Hence (\ref{ali:vanishingX0}) means that 
\begin{align}\label{ali:Epq}
E^{p+q}=0
\end{align}
for all $p+q\ne n$.
Similarly it is easy to see that 
$$
(E_2^{n-i,0})^ \vee
=\Hom _Y^i(\mc L_Y,F_{m*}\mc O_Y).
$$
Therefore what we have to show is that, under the assumption (\ref{ali:Epq}),
$E_2^{p,0}=0$ for all $p< n$ is equivalent to $E_2^{p,d}=0$ for all $p<n-d-1$.
More strongly, we will see below $E_2^{p,d}\cong E_2^{p+d+1,0}$ for $p<n-d-1$.

Note that  
\begin{align}\label{ali:ElE}
\R^q f_*\mc O_E(lE)=0
\end{align}
unless $q=0,d$, since $f|_E\colon E\to C$ is a $\PP^d$-bundle.
We also have
$$
f_*\mc O_E(lE)=0
$$
for all positive $l$. Then 
we have a short exact sequence 
\begin{align}\label{ali:s.e.}
0\to \mc L_Y^{\otimes m}\otimes \omega_Y \otimes \R^q f_*\mc O_X((l-1)E)
\to &\mc L_Y^{\otimes m}\otimes \omega_Y \otimes \R^{q} f_*\mc O_X(lE)\notag\\
\to &\mc L_Y^{\otimes m}\otimes \omega_Y \otimes \R^{q} f_*\mc O_E(lE)\to 0
\end{align}
for $l\ge 0$ and all $q$. Hence by the vanishing $\R^{q} f_*\mc O_X=0$
 for $q\ne 0$ and (\ref{ali:ElE}), 
we conclude that
\begin{align*}
E_2^{p, q}=&H^p(Y,\mc L_Y^{\otimes m}\otimes \omega_Y \otimes \R^q f_*\mc O_X((ma+d)E)) \\
\cong&
H^p(Y,\mc L_Y^{\otimes m}\otimes \omega_Y \otimes \R^q f_*\mc O_X((ma+d-1)E)) \\
\cong& \cdots \\
\cong& H^p(Y,\mc L_Y^{\otimes m}\otimes \omega_Y \otimes \R^q f_*\mc O_X)=0 
\end{align*}
for all $p$ and all $q \ne 0,d$. Thus we have $E_2^{p,q}\cong E_{d+1}^{p,q}$ for all $p,q$.
Therefore from (\ref{ali:Epq}) we obtain
$$
E_2^{p,d}\cong E_{d+1}^{p,d}\cong E_{d+1}^{p+d+1,0}\cong E_2^{p+d+1,0}
$$
for $p+d+1<n$. 
Thus we obtain the conclusion.

(ii)
By the duality,
\begin{align*}
&H^{n-d-1-p}(C,\mc L_Y^{\otimes -m}\otimes f_*\mc O_E(l-d-1))^{\vee}\\
=&H^p(C,\mc L_Y^{\otimes m}\otimes (f_*\mc O_E(l-d-1))^\vee\otimes \omega_C)\\
=&H^p(Y,\mc L_Y^{\otimes m}\otimes \omega_Y \otimes \R^d f_*\mc O_E(lE)).\\
\end{align*}
By the assumption (\ref{ali:C}), the last one vanishes 
for all $l,p$ with $ma+d\ge l$ and $p<n-d-1$.
Then the vanishing of $E_2^{p,d}$
 is a direct consequence of the vanishing $\R^{d} f_*\mc O_X=0$
and (\ref{ali:s.e.}).
\end{proof}


\begin{theorem}\label{thm:surface}
Let $X$ be a toric del Pezzo surface.
Then $\mk D_X$ is a full strong exceptional collection 
on $X$.
\end{theorem}

\begin{proof}
We have already checked the statement for the maximal del Pezzo 
surface $Y_3$ in \S \ref{sub:del}. Then the statement for 
the other cases follows 
from Lemmas \ref{lem:f_*} and \ref{lem:birational1}.
\end{proof}

See also \cite[Theorem 8.2]{HP08} for an interesting result  in this direction.


\begin{lem}\label{lem:fano3}
In the notation in Lemma $\ref{lem:birational1}$,
assume that $X$ and $Y$ are smooth toric Fano $3$-folds and that the vanishing
\begin{align}\label{ali:vanishingX}
\Hom _X^i(\mc L_X,F_{m*}\mc O_X)=0
\end{align}
holds for all $i>0$. 
Then the vanishing
$$\Hom_Y^i(\mc L_Y,F_{m*}\mc O_Y)=0$$
holds for all $i>0$. 
\end{lem}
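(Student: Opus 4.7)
The strategy is to apply Lemma~\ref{lem:birational1} directly, combined with a case analysis driven by the dimension of the center $C$ of the blow-up $f$. Since $n = \dim X = 3$ and $f$ is an equivariant divisorial contraction between smooth projective toric varieties, the center $C$ is either a $T$-invariant point (in which case $d = n - 1 = 2$) or a $T$-invariant smooth rational curve (in which case $d = 1$). In either case the hypothesis of Lemma~\ref{lem:birational1} matches our setting.

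In the point case, Lemma~\ref{lem:birational1}(i) gives the desired vanishing on $Y$ for free: the condition $E_2^{p,d} = 0$ for $p < n - d - 1 = 0$ is vacuous. In the curve case, $C \cong \mb P^1$, and part~(i) reduces the problem to the single equality $E_2^{0,1} = 0$. Applying part~(ii), it then suffices to show
\[
H^1(C,\ \mc L_Y^{-m}|_C \otimes f_*\mc O_E(l-2)) = 0 \quad \text{for every } l \text{ with } 2 \leq l \leq ma + 1,
\]
because $C \cong \mb P^1$ kills $H^i$ for $i \geq 2$ and because $f_*\mc O_E(l-2) = 0$ when $l \leq 1$.

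To handle the remaining vanishing, I would expand $f_*\mc O_E(l-2)$ as a direct sum of line bundles on $\mb P^1$, with degrees read off from the splitting $\mc N_{C/Y} \cong \mc O_{\mb P^1}(-\alpha_2) \oplus \mc O_{\mb P^1}(-\alpha_3)$ supplied by (\ref{eqn:normal}); the integers $\alpha_2, \alpha_3$ are the double $\mb Z$-weights from Figure~\ref{pic18}. The $H^1$-vanishing on $\mb P^1$ then amounts to checking that every line-bundle summand of $\mc L_Y^{-m}|_C \otimes f_*\mc O_E(l-2)$ has degree $\geq -1$, a finite-type combinatorial condition once one knows $a$, $\mc L_Y \cdot C$, and $\alpha_2, \alpha_3$.

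The main obstacle is this last step: verifying the degree bound uniformly. In the generic situation $\mc L_Y^{-1}$ is nef on $C$ (often even nef on $Y$, in which case Lemma~\ref{lem:vani}(i) already settles the vanishing on $Y$ at one stroke without passing through $X$). The delicate cases are those in which $\mc L_Y^{-1}$ fails to be nef, such as the exception $\mc O_X(-D_4 - D_5 + D_6)$ encountered in \S\ref{sub:110}. For these, I would combine the Fano condition on both $X$ and $Y$ — which forces bounds on $\alpha_2, \alpha_3$ and on $-K_X \cdot \tilde C$ for the proper transform of $T$-invariant curves — with the explicit description of $\mc L_Y$ as an element of $\mk D_Y$ from Lemma~\ref{lem:birational0} and the integer $a \geq 0$ in $\mc L_X = f^*\mc L_Y \otimes \mc O_X(aE)$, which together pin down $\deg(\mc L_Y|_C)$ tightly enough to conclude.
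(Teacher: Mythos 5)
Your reduction is the same as the paper's: the point case is vacuous by Lemma \ref{lem:birational1}(i), and for $C\cong\PP^1$ one reduces to the single vanishing $E_2^{0,1}=0$, which via part (ii) and the splitting $f_*\mc O_E(i)\cong \Sym^i\mc N_{C/Y}^\vee$ becomes a degree bound for line-bundle summands on $\PP^1$. But the part you defer as ``the main obstacle'' is the actual content of the lemma, and your sketch of how to close it would not work as stated. First, the binding condition is the summand with $l=ma+d$, whose degree grows with $m$; nefness of $\mc L_Y^{-1}$ on $C$ (i.e.\ $\deg\mc L_Y|_C\le 0$) is \emph{not} sufficient. The paper gets around this by the toric identity $a=-q_4$ (computed from $aE\cdot F=D^X_{\mbi u,\sigma}\cdot F$ for a fiber $F$ of $E\to C$), which makes the $m$-dependence cancel: the minimal degree equals $-mq_5+\beta$, so for $m\gg 0$ the required vanishing is equivalent to the $m$-independent coefficient condition $q_5\le 0$, where $q_5$ is the coefficient of $D_5$ in $D^Y_{\mbi u,\varphi(\sigma)}$ in the normalization $q_1=q_2=0$. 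You never identify this condition, so your ``finite-type combinatorial condition'' with $l$ ranging up to $ma+1$ is not yet in a checkable form.

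Second, your plan for the delicate cases --- ``combine the Fano condition on both $X$ and $Y$ \dots which together pin down $\deg(\mc L_Y|_C)$ tightly enough'' --- is not a proof and is in fact false without using the hypothesis (\ref{ali:vanishingX}) on $X$. The paper writes $q_5=\lfl((\beta\gamma-\alpha)x+(\beta\delta-1)y+\beta z)/m\rfl$ and checks, case by case against the classification in Figure \ref{pic18}, that $q_5\le 0$ \emph{except} for the one contraction with $\beta\ge 2$, namely $X$ in $(11)$ contracting to $Y$ in $(4)$. There the Fano conditions do not save you: the set $\mk D_X$ contains $\mc O_X(-D_4-D_5+D_6)$, whose relevant coefficient is $+1$, and the degree bound genuinely fails for it. The lemma survives only because the hypothesis $\Hom^i_X(\mc L_X,F_{m*}\mc O_X)=0$ excludes exactly this bundle (as shown in \S\ref{sub:110}), and for all remaining $\mc L_X\in\mk D_{\mbox{nef}}$ one has $q_5\le 0$ by the explicit computation there. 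Your proposal mentions this bundle but does not explain that the hypothesis on $X$ is the mechanism that eliminates it; without that step, and without the classification check that all other transitions have $q_5\le 0$, the proof is incomplete at its central point.
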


\begin{proof}
We divide the proof into two parts.
\paragraph{Step 1.}
By the last assertion in Lemma \ref{lem:birational1}(i), 
we may assume that $C\cong \PP^1$.
There are primitive generators 
$\mbi v_1,\ldots,\mbi v_5$ of $1$-dimensional cones 
in $\Delta _Y$ (and we sometimes regard them as generators of 
$1$-dimensional cones in $\Delta _X$) such that 
\begin{itemize}
\item
$C$ is the $T$-invariant curve corresponding to
the $2$-dimensional cone generated by $\mbi v_1$ and $\mbi v_4$, and
\item
the sets $\{\mbi v_1,\mbi v_4,\mbi v_5\}$,
$\{\mbi v_1,\mbi v_2,\mbi v_4\}$ and $\{\mbi v_1,\mbi v_2,\mbi v_3\}$
generate $3$-dimensional cones in $\Delta _Y$ respectively. 
\item $\mbi v_3$ is different from  $\mbi v_4$,
 but it may coincide with $\mbi v_5$. 
\end{itemize} 

We have the following equalities
$$
\mbi v_2+\mbi v_5+\alpha\mbi v_1+\beta\mbi v_4=\mbi 0 \quad
\mbox{ and }
\quad
\mbi v_3+\mbi v_4 +\gamma\mbi v_1 +\delta \mbi v_2=\mbi 0.
$$
for some $\alpha,\beta,\gamma,\delta\in \Z$
(see Figure \ref{contraction}).
Without the loss of generality, we may assume that $\beta\ge \alpha$.
Then we know that $\beta \ge 0$, since $\alpha +\beta \ge -1$ 
by the condition that $Y$ is Fano \cite[Page 89]{Od88}.
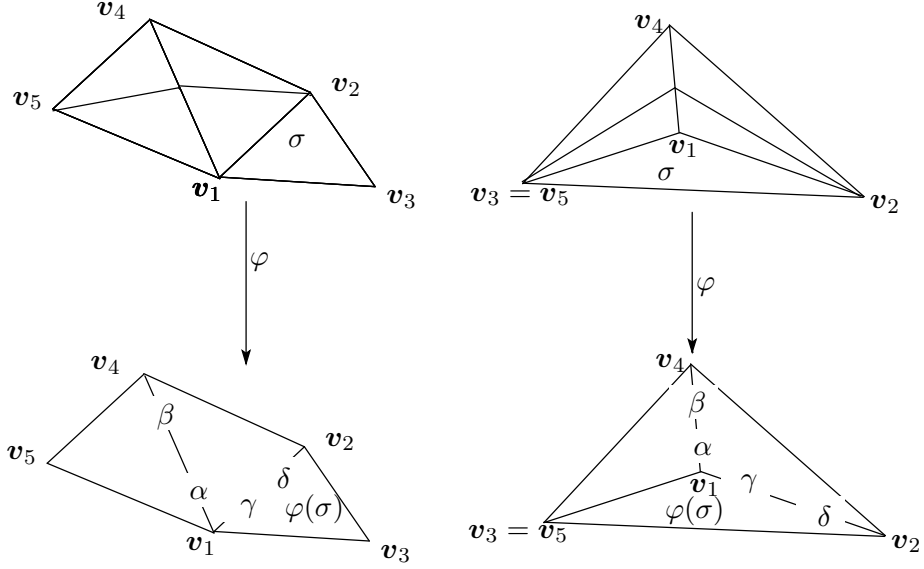
\begin{figure}[t]
\[
\unitlength 0.1in
\begin{picture}( 45.5000, 28.9500)(  1.8000,-32.8500)
%
\special{pn 8}%
\special{pa 926 574}%
\special{pa 1284 1394}%
\special{fp}%
\special{pa 1284 1394}%
\special{pa 420 1030}%
\special{fp}%
\special{pa 420 1030}%
\special{pa 920 566}%
\special{fp}%
\special{pa 920 562}%
\special{pa 1752 944}%
\special{fp}%
\special{pa 1752 944}%
\special{pa 1278 1392}%
\special{fp}%
\put(12.1700,-14.6200){\makebox(0,0){$\mbi v_1$}}%
%
\special{pn 8}%
\special{pa 926 574}%
\special{pa 1284 1394}%
\special{fp}%
\special{pa 1284 1394}%
\special{pa 420 1030}%
\special{fp}%
\special{pa 420 1030}%
\special{pa 920 566}%
\special{fp}%
\special{pa 920 562}%
\special{pa 1752 944}%
\special{fp}%
\special{pa 1752 944}%
\special{pa 1278 1392}%
\special{fp}%
\put(12.1700,-14.6200){\makebox(0,0){$\mbi v_1$}}%
%
\special{pn 8}%
\special{pa 1284 1386}%
\special{pa 2088 1436}%
\special{fp}%
\special{pa 2088 1436}%
\special{pa 1752 944}%
\special{fp}%
%
\special{pn 8}%
\special{pa 926 574}%
\special{pa 1284 1394}%
\special{fp}%
\special{pa 1284 1394}%
\special{pa 420 1030}%
\special{fp}%
\special{pa 420 1030}%
\special{pa 920 566}%
\special{fp}%
\special{pa 920 562}%
\special{pa 1752 944}%
\special{fp}%
\special{pa 1752 944}%
\special{pa 1278 1392}%
\special{fp}%
\put(12.1700,-14.6200){\makebox(0,0){$\mbi v_1$}}%
%
\special{pn 8}%
\special{pa 1284 1386}%
\special{pa 2088 1436}%
\special{fp}%
\special{pa 2088 1436}%
\special{pa 1752 944}%
\special{fp}%
\put(6.4400,-5.6000){\makebox(0,0)[lb]{$\mbi v_4$}}%
%
\special{pn 8}%
\special{pa 926 574}%
\special{pa 1284 1394}%
\special{fp}%
\special{pa 1284 1394}%
\special{pa 420 1030}%
\special{fp}%
\special{pa 420 1030}%
\special{pa 920 566}%
\special{fp}%
\special{pa 920 562}%
\special{pa 1752 944}%
\special{fp}%
\special{pa 1752 944}%
\special{pa 1278 1392}%
\special{fp}%
\put(12.1700,-14.6200){\makebox(0,0){$\mbi v_1$}}%
%
\special{pn 8}%
\special{pa 1284 1386}%
\special{pa 2088 1436}%
\special{fp}%
\special{pa 2088 1436}%
\special{pa 1752 944}%
\special{fp}%
\put(2.1000,-9.4000){\makebox(0,0)[lt]{$\mbi v_5$}}%
\put(20.1600,-8.7100){\makebox(0,0)[rt]{$\mbi v_2$}}%
\put(23.0300,-15.4500){\makebox(0,0)[rb]{$\mbi v_3$}}%
\put(6.1400,-24.1000){\makebox(0,0)[lb]{$\mbi v_4$}}%
%
\special{pn 8}%
\special{pa 896 2424}%
\special{pa 1254 3244}%
\special{fp}%
\special{pa 1254 3244}%
\special{pa 390 2880}%
\special{fp}%
\special{pa 390 2880}%
\special{pa 890 2416}%
\special{fp}%
\special{pa 890 2412}%
\special{pa 1722 2794}%
\special{fp}%
\special{pa 1722 2794}%
\special{pa 1248 3242}%
\special{fp}%
\put(11.8700,-33.1200){\makebox(0,0){$\mbi v_1$}}%
%
\special{pn 8}%
\special{pa 1254 3236}%
\special{pa 2058 3286}%
\special{fp}%
\special{pa 2058 3286}%
\special{pa 1722 2794}%
\special{fp}%
\put(1.8000,-27.9000){\makebox(0,0)[lt]{$\mbi v_5$}}%
\put(19.8600,-27.2100){\makebox(0,0)[rt]{$\mbi v_2$}}%
\put(22.7300,-33.9500){\makebox(0,0)[rb]{$\mbi v_3$}}%
\put(14.4000,-17.8000){\makebox(0,0)[lt]{$\varphi$}}%
%
\special{pn 8}%
\special{pa 1420 1514}%
\special{pa 1420 2360}%
\special{fp}%
\special{sh 1}%
\special{pa 1420 2360}%
\special{pa 1440 2292}%
\special{pa 1420 2306}%
\special{pa 1400 2292}%
\special{pa 1420 2360}%
\special{fp}%
\put(17.3000,-12.3000){\makebox(0,0)[rb]{$\sigma$}}%
\put(16.2000,-31.9000){\makebox(0,0)[lb]{$\varphi (\sigma)$}}%
%
\special{pn 8}%
\special{pa 420 1040}%
\special{pa 1070 920}%
\special{fp}%
\special{pa 1750 960}%
\special{pa 1750 960}%
\special{fp}%
%
\special{pn 8}%
\special{pa 1070 920}%
\special{pa 1080 930}%
\special{fp}%
\special{pa 1080 910}%
\special{pa 1740 950}%
\special{fp}%
%
\special{pn 8}%
\special{sh 0}%
\special{pa 1030 2930}%
\special{pa 1280 2930}%
\special{pa 1280 3106}%
\special{pa 1030 3106}%
\special{pa 1030 2930}%
\special{ip}%
\put(11.7500,-30.3500){\makebox(0,0){$\alpha$}}%
%
\special{pn 8}%
\special{sh 0}%
\special{pa 860 2530}%
\special{pa 1110 2530}%
\special{pa 1110 2706}%
\special{pa 860 2706}%
\special{pa 860 2530}%
\special{ip}%
\put(10.0500,-26.3500){\makebox(0,0){$\beta$}}%
%
\special{pn 8}%
\special{sh 0}%
\special{pa 1290 3010}%
\special{pa 1540 3010}%
\special{pa 1540 3186}%
\special{pa 1290 3186}%
\special{pa 1290 3010}%
\special{ip}%
\put(14.3500,-31.1500){\makebox(0,0){$\gamma$}}%
%
\special{pn 8}%
\special{sh 0}%
\special{pa 1470 2850}%
\special{pa 1720 2850}%
\special{pa 1720 3026}%
\special{pa 1470 3026}%
\special{pa 1470 2850}%
\special{ip}%
\put(16.1500,-29.5500){\makebox(0,0){$\delta$}}%
\put(34.2900,-6.3000){\makebox(0,0)[lb]{$\mbi v_4$}}%
\put(36.8400,-12.3700){\makebox(0,0){$\mbi v_1$}}%
\put(25.8000,-14.4000){\makebox(0,0)[lt]{$\mbi v_3=\mbi v_5$}}%
\put(48.0700,-14.8300){\makebox(0,0)[rt]{$\mbi v_2$}}%
%
\special{pn 8}%
\special{pa 3612 596}%
\special{pa 2852 1420}%
\special{fp}%
\special{pa 2852 1420}%
\special{pa 3666 1154}%
\special{fp}%
\special{pa 3666 1154}%
\special{pa 4620 1488}%
\special{fp}%
\special{pa 4620 1488}%
\special{pa 3612 592}%
\special{fp}%
\special{pa 3612 592}%
\special{pa 3662 1154}%
\special{fp}%
\special{pa 2852 1418}%
\special{pa 4616 1492}%
\special{fp}%
\put(35.3900,-24.0000){\makebox(0,0)[lb]{$\mbi v_4$}}%
\put(37.9400,-30.0700){\makebox(0,0){$\mbi v_1$}}%
\put(25.7000,-32.0000){\makebox(0,0)[lt]{$\mbi v_3=\mbi v_5$}}%
\put(49.1700,-32.5300){\makebox(0,0)[rt]{$\mbi v_2$}}%
%
\special{pn 8}%
\special{pa 3722 2366}%
\special{pa 2962 3190}%
\special{fp}%
\special{pa 2962 3190}%
\special{pa 3776 2924}%
\special{fp}%
\special{pa 3776 2924}%
\special{pa 4730 3258}%
\special{fp}%
\special{pa 4730 3258}%
\special{pa 3722 2362}%
\special{fp}%
\special{pa 3722 2362}%
\special{pa 3772 2924}%
\special{fp}%
\special{pa 2962 3188}%
\special{pa 4726 3262}%
\special{fp}%
%
\special{pn 8}%
\special{pa 3730 1570}%
\special{pa 3730 2300}%
\special{fp}%
\special{sh 1}%
\special{pa 3730 2300}%
\special{pa 3750 2234}%
\special{pa 3730 2248}%
\special{pa 3710 2234}%
\special{pa 3730 2300}%
\special{fp}%
\special{pa 3730 2300}%
\special{pa 3730 2300}%
\special{fp}%
\put(37.5000,-19.1000){\makebox(0,0)[lt]{$\varphi$}}%
%
\special{pn 8}%
\special{pa 2860 1410}%
\special{pa 2860 1410}%
\special{fp}%
\special{pa 2850 1410}%
\special{pa 3640 920}%
\special{fp}%
\special{pa 3640 920}%
\special{pa 4600 1486}%
\special{fp}%
\put(35.5000,-14.0000){\makebox(0,0)[lb]{$\sigma$}}%
\put(35.9000,-32.0000){\makebox(0,0)[lb]{$\varphi(\sigma)$}}%
%
\special{pn 8}%
\special{sh 0}%
\special{pa 3610 2470}%
\special{pa 3860 2470}%
\special{pa 3860 2646}%
\special{pa 3610 2646}%
\special{pa 3610 2470}%
\special{ip}%
\put(37.5500,-25.7500){\makebox(0,0){$\beta$}}%
%
\special{pn 8}%
\special{sh 0}%
\special{pa 3640 2700}%
\special{pa 3890 2700}%
\special{pa 3890 2876}%
\special{pa 3640 2876}%
\special{pa 3640 2700}%
\special{ip}%
\put(37.8500,-28.0500){\makebox(0,0){$\alpha$}}%
%
\special{pn 8}%
\special{sh 0}%
\special{pa 3880 2880}%
\special{pa 4130 2880}%
\special{pa 4130 3056}%
\special{pa 3880 3056}%
\special{pa 3880 2880}%
\special{ip}%
\put(40.2500,-29.8500){\makebox(0,0){$\gamma$}}%
%
\special{pn 8}%
\special{sh 0}%
\special{pa 4270 3050}%
\special{pa 4520 3050}%
\special{pa 4520 3226}%
\special{pa 4270 3226}%
\special{pa 4270 3050}%
\special{ip}%
\put(44.1500,-31.5500){\makebox(0,0){$\delta$}}%
\end{picture}%
\]
\caption{Divisorial contractions}\label{contraction} 
\end{figure}
By these equalities, we have 
\begin{align}\label{ali:vs}
\mbi v_5-\beta\mbi v_3+(1-\beta\delta)\mbi v_2+(\alpha-\beta\gamma)\mbi v_1=0.
\end{align}

Note that the set $\{\mbi v_1,\mbi v_2,\mbi v_3\}$ also 
generates a $3$-dimensional 
cone, say $\sigma$, in $\Delta _X$. So we can apply 
Lemma \ref{lem:birational0} for $\sigma$.
Take $\mbi u\in P_m^3$ such that 
 $\mc L_Y\cong \mc O_X(D_{\mbi u,\varphi(\sigma)}^Y)$, and
denote by $D_i(=D_i^Y)$ (resp. $D_i^X$) the 
prime divisors on $Y$ 
(resp. $X$) corresponding to $\mbi v_i$, and 
we put $q_i$ to be the coefficient of $D_i$ in a $T$-invariant divisor 
$D_{\mbi u,\varphi(\sigma)}^Y$ on $Y$, namely we have 
$$
D_{\mbi u,\varphi(\sigma)}^Y=q_1D_1+q_2D_2+q_3D_3+q_4D_4+\cdots.
$$  
We have 
$$
D_{\mbi u,\sigma}^X= f^*D_{\mbi u,\varphi(\sigma)}^Y+aE,\quad 
\mbox{ that is } \quad 
\mc L_X\cong f^*\mc L_Y\otimes \mc O_X(aE)
$$
for some $a\ge 0$ as in Lemma \ref{lem:birational0}.
 
To check (\ref{ali:C}) in the case $C\cong \PP^1$, it is enough to show 
\begin{align}\label{ali:C1}
H^1(C,\mc L_Y^{\otimes -m}\otimes f_*\mc O_E(ma-1))=0.
\end{align}
By choosing $\mbi v_1,\mbi v_2,\mbi v_3$ as a basis of the lattice $N\cong\Z^3$, 
we obtain from (\ref{ali:vs})  that
\begin{equation}\label{equ:v_3}
\mbi v_5=\t (\beta\gamma-\alpha,\beta\delta-1,\beta)
\end{equation}
and
$q_1=q^m_1(\mbi u,\varphi(\sigma))=0, \mbox{ and } q_2=q^m_2(\mbi u,\varphi(\sigma))=0$.
We also know by (\ref{eqn:normal}) that 
$\mc N_{C/Y}\cong \mc O_C(\alpha)\oplus \mc O_C(\beta)$. 
In particular, we have
$$
f_*\mc O_E(i)\cong 
\Sym ^{i}\mc N_{C/Y}^\vee 
\cong \mc O_C(-i\alpha)\oplus \mc O_C(-(i-1)\alpha-\beta)\oplus \cdots 
\oplus \mc O_C(-i\beta) 
$$
for $i\ge 0$.
We denote $F$ a fiber of $\PP^1$-bundle 
$f|_E\colon E\to C$. Then we note that 
$T$-invariant prime divisors on $X$ which intersect with $F$ 
are only $D_1^X,D_2^X,D_4^X$ and $D_5^X$ (and of course, 
$F$ is contained in $E$).
Thus we have
$$-a=aE\cdot F= (D_{\mbi u,\sigma}^X- 
f^*D_{\mbi u,\varphi(\sigma)}^Y) \cdot F
=D_{\mbi u,\sigma}^X\cdot F =q_4,$$
since $D_2^X\cdot F=D_5^X\cdot F=0$ and $q_1=0$.
Combining this with
$$
\deg \mc L_Y|_C=(q_4D_4+q_5D_5)\cdot C
= \beta q_4+q_5,
$$
we have 
\begin{align*}
&\deg \mc L_Y^{\otimes -m}\otimes O_C((1-ma)\beta)
=-m(\beta q_4+q_5)+(1-ma)\beta \\
=&-m(\beta q_4+q_5)+(1+mq_4)\beta
=-mq_5+\beta.\\
\end{align*}
Since $\beta \ge \alpha$ and $\beta \ge 0$, we know that 
$q_5\leqq 0$ if and only if (\ref{ali:C1}) is true for $m\gg 0$.

By (\ref{equ:v_3}), we have 
$$
q_5=\lfl\frac{(\beta\gamma -\alpha )x+(\beta\delta -1)y+\beta z}{m}\rfl
$$
for $\mbi u=\t(x,y,z)\in P_m^3$.
By observing Figure \ref{pic18}, we can see that
\begin{itemize}
\item in all cases, we have $\beta\delta -1\leqq 0$, 
\item if $\beta \ge 2$,  $X$ is in 
$(11)$ and $Y$ is in $(4)$ in Theorem \ref{thm:18fano},   
\item if $\beta =1$, $\beta\gamma -\alpha\leqq 0$, and
\item if $\beta \leqq 0$ (then actually $\beta = 0$), 
 $\beta\gamma -\alpha\leqq 1$.
\end{itemize}
Consequently, we obtain $q_5\leqq 0$, except the case
 $X$ is in $(11)$
and $Y$ is in $(4)$.
\paragraph{Step 2.}
Let $X$ be the Fano $3$-fold in (11) and take $\mc L_X\in \mk D_X$. 
Then we have seen in \S \ref{sub:110} that 
$\mc L_X\not\cong \mc O_X(-D_4-D_5+D_{6})$ 
if and only if the equality
(\ref{ali:vanishingX}) 
holds for all $i>0$. 
Note that $\mbi v_6$ in Figure \ref{Fano11&18} 
plays the role of $\mbi v_5$ in Figure \ref{contraction}.
Consequently we know that if 
$\mc L_X$ is not isomorphic to $\mc O_X(-D_4-D_5+D_{6})$,
then $q_5\le 0$ by the computation in \S \ref{sub:110}. 
Then the result follows.
\end{proof}

Now we give the proof of Theorem \ref{thm:main1}.

\begin{proof}
In \S \ref{sec:Bondal} 
we have already seen that $F_{m*}\mc O_X$ is a generator 
for maximal smooth toric Fano $3$-folds $X$.
We have also seen in \S \ref{sub:17} and Proposition \ref{prop:(18)} 
that $F_{m*}\mc O_X$ is a tilting generator for the Fano $3$-folds in $(17)$ and $(18)$. Then 
Lemmas  \ref{lem:f_*} and \ref{lem:fano3} imply that $\mk D_X$ is a full 
strong exceptional collection
for all smooth toric Fano $3$-folds except the cases (4) and (11).

For the case $X$ in (11), we have seen in Proposition \ref{prop:(11)} that 
the set $\mk D_{\nef}$ 
is a full strong exceptional collection on $X$,
and in \S \ref{sub:110} that 
$$
\Hom _X^i(\mc L_X,F_{m*}\mc O_X)=0
$$
holds for all $i>0$ and all $\mc L_X\in \mk D_{\nef}$.
Take the Fano $3$-fold $Y$ in $(4)$ and consider the blowing-up $f\colon X\to Y$  
in Figure \ref{pic18a}.
Then Lemmas \ref{lem:f_*} and \ref{lem:fano3} implies that 
the subset $\bigl\{f_*\mc L_X \bigm| \mc L_X\in \mk D_{\nef}\}$
of $\mk D_Y$ 
forms a full strong exceptional collection on $Y$.
\end{proof}


\section*{Acknowledgments}
Hiroshi Sato and Yukinobu Toda are always generous with their
knowledge and ideas.
A part of the paper was written during my stay in Max-Planck Institute in September, 2010.
I appreciate all of them for their support.
I am also supported by the Grants-in-Aid 
for Scientific Research (No.23340011).


\noindent
Hokuto Uehara

Department of Mathematics
and Information Sciences,
Tokyo Metropolitan University,
1-1 Minamiohsawa,
Hachioji-shi,
Tokyo,
192-0397,
Japan 

{\em e-mail address}\ : \  hokuto@tmu.ac.jp


\begin{thebibliography}{1}
\bibitem[1]{Ba82}
V.~Batyrev,
Toroidal Fano $3$-folds, {\it Math. USSR-Izv}. {\bf 19},  (1982) 13--25. 

\bibitem[2]{Ba04}
A. Bayer, Semisimple quantum cohomology and blowups,
{\it Int. Math. Res. Not.} {\bf 40}, (2004), 2069--2083.

\bibitem[3]{BT10}
A. Bernardi, S. Tirabassi, 
Derived categories of toric Fano 3-Folds via the Frobenius morphism,
{\it Matematiche (Catania)} {\bf 64}, (2009), 117--154.
    
\bibitem[4]{Bo06}
A. I. Bondal, Derived categories of toric varieties. 
{\it Obervolfach Reports}, {\bf 3}, (2006), 284--286.

\bibitem[5]{BP94}
A. I. Bondal, A. Polishchuk,
Homological properties of associative algebras: the method of helices,
{\it Russian Academy of Sciences. Izv. Math.} {\bf 42}, (1994), 219--260.

\bibitem[6]{BV03}
A. I. Bondal, M. Van den Bergh,
Generators and representability of functors in commutative and 
noncommutative geometry,
{\it Mosc. Math. J.} {\bf 3}, (2003), 1--36. 

    

\bibitem[7]{BH09}
L. Borisov,  Z. Hua,
On the conjecture of King for smooth toric Deligne-Mumford stacks, 
{\it Adv. Math.}
{\bf 221}, (2009), 277--301.


\bibitem[8]{CM10}
L. Costa, R.M. Miro-Roig,
Frobenius splitting and derived category of toric varieties,
{\it Illinois J. Math.} {\bf 54}, (2010), 649--669.

\bibitem[9]{CM}
L. Costa, R.M. Miro-Roig,
Derived category of toric varieties with small Picard numbers, {\it Cent. Eur. J. Math.}  {\bf 10},  (2012), 1280--1291.

\bibitem[10]{CRM11}
L. Costa, S. Di Rocco, R.M. Miro-Roig, 
Derived category of fibrations, {\it Math. Res. Lett.} {\bf 18}, (2011), 425–432. 

    
\bibitem[11]{DLM10}
A. Dey, M. Lason, M. Michalek, 
Derived category of toric varieties with Picard number three, 
{\it Matematiche (Catania)} {\bf 64}, (2009), 99--116.
    
\bibitem[12]{Ef10}
A. I. Efimov, Maximal lengths of exceptional collections of line bundles,
arXiv:1010.3755.
    
    
\bibitem[13]{Fu93}
W. Fulton, {\it  Introduction to toric varieties.} Annals of Mathematics Studies, 131.
Princeton University Press, Princeton, NJ, 1993. xii+157 pp.


\bibitem[14]{HP06}
L.~Hille, M.~Perling,
A Counterexample to King's Conjecture, 
{\it Compos. Math.} {\bf 142}, (2006), 1507--1521. 


\bibitem[15]{HP08}
L.~Hille, M.~Perling,
 Exceptional sequences of invertible sheaves on rational surfaces,
{\it Compos. Math.} {\bf 147}, (2011), 1230--1280.

\bibitem[16]{Hu06}
D.~Huybrechts, {\it Fourier-Mukai transforms in algebraic geometry.} 
Oxford Mathematical Monographs. The Clarendon Press, Oxford University Press, 
Oxford, 2006. viii+307 pp. 

\bibitem[17]{IU09}
A. Ishii, K. Ueda, 
Dimer models and exceptional collections, 
arXiv:0911.4529. 

\bibitem[18]{Ka06}
Y. Kawamata, Derived categories of toric varieties,  
{\it Michigan Math. J.}  {\bf 54},  (2006),  no. 3, 517--535. 

\bibitem[19]{Ki}
A. King, Tilting bundles on some rational surfaces, preprint.

\bibitem[20]{LM10}
M. Lason, M. Michalek, 
 On the full, strongly exceptional collections on toric varieties 
with Picard number three, {\it Collect. Math.} {\bf 62}, (2011), 275--296

    
\bibitem[21]{Od88}
T. Oda,
{\it Convex bodies and algebraic geometry.
An introduction to the theory of toric varieties.} Ergebnisse der Mathematik und ihrer Grenzgebiete (3), 15. Springer-Verlag, Berlin, 1988. viii+212 pp. 

\bibitem[22]{OU13}
R. Ohkawa, H. Uehara,
Frobenius morphisms and derived categories on 2-dimensional toric Deligne--Mumford stacks,
 {\it Adv. Math.} {\bf 244}, (2013) 241-167.

\bibitem[23]{Sa09} 
A. Samokhin, Tilting bundles via the Frobenius morphism,
arXiv:0904.1235v2. 

\bibitem[24]{Th00}
Jesper F. Thomsen, Frobenius direct images of line bundles on toric varieties.
{\it J. Algebra.} {\bf 226}, (2000), 865--874. 

\bibitem[25]{VdB04}
M. Van den Bergh, Three-dimensional flops and noncommutative rings. 
{\it  Duke Math. J.} {\bf 122},  
(2004), 423--455.

\bibitem[26]{WW82}
K. Watanabe, M. Watanabe, The classification of Fano $3$-folds with torus embeddings,
{\it Tokyo J. Math.} {\bf 5}, (1982), 37-48. 

\end{thebibliography}
\end{document}